\newtheorem{lem}{Lemma}[section]
\newtheorem{thm}[lem]{Theorem}
\newtheorem{prop}[lem]{Proposition}
\newtheorem{cor}[lem]{Corollary}
\theoremstyle{definition}
\newtheorem{definition}[lem]{Definition}
\newtheorem{example}[lem]{Example}
\theoremstyle{remark}
\newtheorem{remark}[lem]{Remark}
\numberwithin{equation}{section}
  \newcommand\cC{\mathcal{C}}  \newcommand\cL{\mathcal{L}}\newcommand\cM{\mathcal{M}}\newcommand\cO{\mathcal{O}}\newcommand\cT{\mathcal{T}}\newcommand\cU{\mathcal{U}}\newcommand\cV{\mathcal{V}}\newcommand\cY{\mathcal{Y}}\newcommand\cZ{\mathcal{Z}}
\renewcommand\AA{\mathbb{A}}\newcommand\GG{\mathbb{G}}\newcommand\HH{\mathbb{H}}\newcommand\PP{\mathbb{P}}\newcommand\QQ{\mathbb{Q}}
\newcommand\ZZ{\mathbb{Z}}
\newcommand\arr{\ifinner\to\else\longrightarrow\fi}
\newcommand\hookarr{\hookrightarrow}
\renewcommand{\setminus}{\smallsetminus}
\renewcommand{\ss}{\operatorname{ss}}
\newcommand{\Hilb}{\operatorname{Hilb}}
\newcommand{\Pic}{\operatorname{Pic}}
\newcommand{\Def}{\operatorname{Def}}
\newcommand{\Cr}{\operatorname{Cr}}
\newcommand{\Proj}{\operatorname{Proj}}
\newcommand{\Aut}{\operatorname{Aut}}
\newcommand{\oh}{\cO}
\newcommand{\Spec}{\operatorname{Spec}}
\newcommand{\tensor} {\otimes}
\renewcommand{\tilde}{\widetilde}
\newcommand{\Spf}{\operatorname{Spf}}
\renewcommand{\bar}{\overline}
\newcommand{\PGL}{\operatorname{PGL}}
\newcommand{\SL}{\operatorname{SL}}
\newcommand{\dual}{\vee}
\renewcommand{\hat}{\widehat}
\renewcommand{\L}{\mathcal{L}}
\def\nb{\nobreakdash}
\def\F{\mathcal{F}}
\def\N{\mathrm{N}}
\renewcommand{\HH}{\mathrm{H}}
\DeclareMathOperator\spec{Spec}
\def\eps{\varepsilon}
\newcommand{\gitq}{/\hspace{-0.25pc}/}
\newcommand\Mg[1]{\overline{\mathcal{M}}_{#1}}
\newcommand\M{\overline{M}}
\newcommand\ra{\rightarrow}
\renewcommand{\L}{\mathcal{L}}
\def\nb{\nobreak}
\def\F{\mathcal{F}}
\def\N{\mathrm{N}}
\def\co{\colon\thinspace} 
\newcommand\X{\mathcal{X}}
\begin{document}

\title[Singularities with $\GG_m$-action and the log MMP for $\overline{M}_g$]{Singularities with $\GG_m$-action and\\ the log minimal model program for $\overline{M}_g$}

\author[Alper]{Jarod Alper}
\author[Fedorchuk]{Maksym Fedorchuk}
\author[Smyth]{David Smyth}

\address[Alper]{Department of Mathematics\\
Columbia University\\
2990 Broadway\\
New York, NY 10027}
\email{jarod@math.columbia.edu}

\address[Fedorchuk]{Department of Mathematics\\
Columbia University\\
2990 Broadway\\
New York, NY 10027}
\email{mfedorch@math.columbia.edu}

\address[Smyth]{Department of Mathematics\\
Harvard University\\
1 Oxford Street\\
Cambridge, MA 01238}
\email{smyth@math.harvard.edu}


\begin{abstract}
We give a precise formulation of the modularity 
principle for the log canonical models 
$\bar{M}_g(\alpha) := \Proj \bigoplus_{d \ge 0} \HH^0(\bar{\cM}_g, d( K_{\bar{\cM}_g} + \alpha \delta))$.  
Assuming the modularity principle holds, we develop and compare two methods for determining the critical 
$\alpha$-values at which a singularity or complete curve with 
$\mathbb{G}_m$-action arises in the modular interpretation of $\bar{M}_g(\alpha)$. The first method involves 
a new invariant of curve singularities with $\mathbb{G}_m$-action, constructed via the characters of the 
induced $\mathbb{G}_m$-action on spaces of pluricanonical forms. The second method involves intersection 
theory on the variety of stable limits of a singular curve.  We compute the expected $\alpha$-values for large 
classes of singular curves, including curves with ADE, toric, and 
unibranch Gorenstein 
singularities, as well as for ribbons, and show that the two methods yield identical predictions. We use these 
results to give a conjectural outline of the log MMP for $\bar{M}_g$.
\end{abstract}
\subjclass[2000]{Primary: 14H10; Secondary: 14D23, 14H20}

\maketitle

\setcounter{tocdepth}{1}
\tableofcontents

\section{Introduction} \label{S:introduction}
In an effort to understand the canonical model of $\bar{M}_g$, Hassett and Keel initiated a program to give modular interpretations to the log canonical models
\[
\bar{M}_g(\alpha) := \Proj \bigoplus_{d \ge 0} \HH^0(\bar{\cM}_g, \lfloor d( K_{\bar{\cM}_g} + \alpha \delta) \rfloor),
\]
for all $\alpha \in [0,1] \cap \QQ$ such that $K_{\bar{\cM}_g} + \alpha \delta$ is effective; see \cite{Hgenus2, hassett-hyeon_contraction,hassett-hyeon_flip, hyeon-lee_genus3}.  The assertion that these log canonical models should admit modular interpretations, which is implicit in the work of Hassett, Hyeon, and Keel, can be formulated precisely as follows:

\vspace{1pc}

\noindent
\textbf{Modularity principle for the log MMP for $\M_g$. } {\it Let $\cU_{g}$ be the stack 
of all complete connected Gorenstein curves $C$ of arithmetic genus $g$ with $\omega_C$ ample.
For $\alpha \in [0,1] \cap \QQ$ such that $K_{\bar{\cM}_g} + \alpha \delta$ is effective, there exists an open 
substack $\bar{\cM}_g(\alpha) \subseteq \cU_g$, and a map 
$\phi\co\bar{\cM}_g(\alpha) \rightarrow \bar{M}_g(\alpha)$ such that
$\phi$ is cohomologically-affine and $\phi_*\cO_{ \bar{\cM}_g(\alpha)}=\cO_{\bar{M}_g(\alpha)}$. 
Equivalently, the log canonical model $\M_{g}(\alpha)$ is a good moduli space for $\bar{\cM}_{g}(\alpha)$.
} 

\vspace{1pc}

\par
\noindent
We refer to \cite{alper_good_arxiv} for 
a discussion of the essential properties of good moduli spaces, which may be thought of as best-possible approximations to a coarse moduli space in cases where the existence of a coarse moduli space is precluded by non-separatedness of the moduli stack. Hassett, Hyeon, and Lee have verified the modularity principle for the log MMP for $\bar{M}_{g}$ for all $\alpha$ when $g=2,3$ \cite{Hgenus2, hyeon-lee_genus3}, and for $\alpha > \frac{7}{10}-\epsilon$ in arbitrary genus \cite{hassett-hyeon_contraction,hassett-hyeon_flip}. In exploring possible extensions of their work, it is natural to consider the following question: Assuming the modularity principle holds, what curves should appear in the stacks $\bar{\cM}_{g}(\alpha)$? How can we tell at which $\alpha$-value a given singular curve should appear? In this paper, we develop two methods for answering these questions, at least for curves with a $\mathbb{G}_m$-action, and show that the two methods give identical predictions.

To explain the first method, consider a complete 
curve $C$ with a $\GG_m$-action 
$\eta\co \GG_m \to \Aut(C)$ and an isolated singularity at a point $p \in C$.  If $\cL$ is a line bundle on 
$\bar{\cM}_g$ that 
extends to a neighborhood of $[C]$ in the stack of all curves, then there is an induced action of $\GG_m$ on 
the fiber of $\cL$ over $[C]$ given by a character $\chi_{\cL}(C,\eta)$.  The key observation connecting 
characters with the modularity principle is: 
If a curve $C$ is to appear in $\bar{\cM}_g(\alpha)$ for some $\alpha$, then the character of 
$K_{\bar{\cM}_{g}(\alpha)}+ \alpha \delta = 13 \lambda - (2-\alpha) \delta$ 
is necessarily trivial since the line bundle descends to $\bar{M}_g(\alpha)$; this is the essence of 
Proposition \ref{C:CriticalAlpha}.
We compute the characters of generators of $\text{Pic}(\bar{\cM}_g)$ for a large class of singular curves with $\GG_m$-action.  In particular, we calculate the characters for curves with ADE singularities, planar toric singularities, and unibranch Gorenstein singularities, as well as for ribbons;
we collect our results in Tables \ref{T:table-characters} and \ref{T:table-dangling}. As a consequence, we predict the precise $\alpha$-values at which curves with these singularities arise in the modular interpretations of $\bar{\cM}_g(\alpha)$; see Table \ref{table-predictions}. This is our first main result.

Our second method for predicting $\alpha$-values is based on the following observation: If a locus $\cT\subseteq \overline{\cM}_g$ is covered by $(K_{\Mg{g}}+\alpha\delta)$-negative curves, i.e. curves on 
which $K_{\Mg{g}}+\alpha\delta$ has negative degree, then $\cT $ falls in the stable base 
locus of $K_{\Mg{g}}+\alpha\delta$ and thus is flipped by the rational map $\M_g \dashrightarrow \bar{M}_g(\alpha)$.  If $\cT = \cT_C$ is the variety of stable curves arising from stable reduction of a singular curve $C$, then
$C$ appears in a modular interpretation of $\bar{\cM}_g(\alpha)$ for those $\alpha$ such that
$\cT_C$ is covered by $(K_{\Mg{g}} + \alpha \delta)$-negative curves. In Section \ref{S:intersection-theory}, we compute these anticipated $\alpha$-values for toric singularities using degeneration and intersection theory techniques. 
Comparing with Table \ref{T:table-characters}, we observe that the $\alpha$-values obtained by character theory and intersection theory are the same.  The fact that the two techniques yield the same $\alpha$-values is not merely coincidental: In Theorem \ref{theorem-character-intersection}, we prove a general theorem which provides a formal relationship between the characters and the intersection numbers.

Because these two heuristics give such a useful guide for defining the moduli functors $\Mg{g}(\alpha)$, 
we expect them to play an important role in verifying the modularity principle for the log MMP for 
$\M_{g}$ for $\alpha<7/10$.

\subsection{Notation} \label{section-notation}
 We work over an algebraically closed field $k$ of characteristic $0$.
Let $\cU_g$ be the stack of all complete connected Gorenstein curves $C$ with $\omega_C$ ample.  Let $\pi\co \cC_g \arr \cU_g$ be the universal curve.  Denote by $\omega_{\cC_g / \cU_g}$ the relative dualizing sheaf on $\cC_g$.  
The following line bundles are defined on all of
$\cU_g$:
\begin{align*}
\lambda = \lambda_1	& := \det \pi_* (\omega_{\cC_g / \cU_g}), \\
\lambda_m			& := \det \pi_* (\omega_{\cC_g / \cU_g}^{m}).
\end{align*}
The following divisor classes are defined on any open 
substack 
 of $\cU_g$ that satisfies Serre's condition $S_2$ and whose locus of 
worse-than-nodal curves is of codimension at least $2$:
\begin{align*}
\kappa	& = \pi_* (c_1^2(\omega_{\cC_g / \cU_g})), \\
K					& = \text{the canonical divisor class}, \\
\delta_0 = \delta_{\text{irr}}&= \text{the divisor of irreducible singular curves}, \\
\delta = \delta_{0} + \delta_{\text{red}} & = \delta_0 + \delta_1 + \cdots + \delta_{\lfloor g/2 \rfloor}.
\end{align*}
We can define $K$ simply as $K=13\lambda - 2\delta$; see below for a more intrinsic definition.  Furthermore, we have the following relations on this open substack:
\begin{equation} \label{relations}
\begin{aligned}
\lambda_2	 &= 13 \lambda - \delta = K + \delta, \\   
\kappa &= 12 \lambda - \delta = - \lambda + \lambda_2 = \frac{12}{13}\bigl(K + \frac{11}{12} \delta\bigr).  
\end{aligned}
\end{equation}
We define the \emph{slope} of a divisor $s \lambda - \delta$ to be $s$ and the \emph{$\alpha$-value} of 
a divisor $K + \alpha \delta$ to be $\alpha$.  
In particular, the slope of $K+ \alpha \delta$ is $13/(2-\alpha)$ and the $\alpha$-value of $s\lambda -  \delta$ is 
$2- 13/s$.

If $B\rightarrow \Mg{g}$ is a complete curve, the {\em slope} of $B$ is defined to be 
$(\delta\cdot B)/(\lambda\cdot B)$.

\subsection{Defining the canonical divisor $K$}  \label{section-discussion-K}
Let $\cM$ be the smooth locus of $\cU_g$.  
Consider the cotangent complex $L_{\cM}$ of $\cM$ which can be described explicitly as
follows. Choose a quotient stack presentation $\cM=[M / G]$; e.g., we
may take $M$ to be a Hilbert scheme and $G$ to be a group of
automorphisms of a projective space.  Then the cotangent complex is given by:
$$L_{\cM}\co  (\Omega_M \xrightarrow{\alpha} \mathfrak{g}^{\dual} \otimes \cO_M),$$
where $\Omega_M$ inherits its natural $G$-linearization and
$\mathfrak{g}$ is the adjoint representation; the morphism $\alpha$ is
the pullback of the natural map 
$\text{pr}_2^{\,*} \Omega_M \to \Omega_{G \times M} = \text{pr}_2^{\,*} (\Omega_G)$ along the identity section.

Then the canonical line bundle is defined as
$$K_{\cM} := \det L_{\cM} = K_M \tensor (\mathfrak{g} \tensor \oh_M).$$

\begin{remark} One can check that the Grothendieck-Riemann-Roch
calculation implies that
$$K_{\cM} =13 \lambda - 2 \delta$$
whenever $K_{\cM}$, $\lambda$ and $\delta$ are all defined. 
\end{remark}

\subsection*{Acknowledgements} 
We thank Anand Deopurkar, David Hyeon, and Fred van der Wyck for stimulating discussions. 
We also thank David Hyeon for sharing an early version of his preprint \cite{hyeon_predictions}.

\section{$\alpha$-invariants of curve singularities}
\label{S:character-theory}
To begin, recall that if $[C] \in \cU_g$ is any point and $\cL$ is a line bundle defined in a neighborhood of $[C]$, then the natural action of $\Aut(C)$ on the fiber $\cL\vert_{[C]}$ induces a character $\Aut(C) \rightarrow \mathbb{G}_m$. If $\eta\co \mathbb{G}_m \rightarrow \Aut(C)$ is any one-parameter subgroup, then there is an induced character $\mathbb{G}_m \rightarrow \mathbb{G}_m$ which is necessarily of the form $z \mapsto z^{n}$ for some integer $n \in \mathbb{Z}$. Given a curve $[C]$, a one-parameter subgroup $\eta\co  \mathbb{G}_m \rightarrow \Aut(C)$, and a line bundle $\cL$, we denote this integer by $\chi_{\cL}(C, \eta)$. 
If $\cL=\lambda_{m}$, we write simply $\chi_{m}(C,\eta)$. Furthermore, if $\Aut(C) \simeq \mathbb{G}_m$, then we write $\chi_{\cL}(C)$ or $\chi_{m}(C)$, where the one-parameter subgroup $\eta\co  \GG_m \rightarrow \Aut(C)$ is understood to be the identity.\footnote{Note that, in general, the integers $\chi_{m}(C)$ are only defined up to sign since the choice of isomorphism $\Aut(C) \simeq \mathbb{G}_m$ depends on a sign. The ratios  $\chi_{l}(C)/\chi_{m}(C)$ however are well-defined, and this is all we will need.}

\begin{lem}\label{L:Observations}
Suppose the modularity principle for the log MMP for $\M_g$ holds and that 
$\bar{\cM}_g(\alpha)$ is $S_2$ and the locus 
of worse-than-nodal curves in $\bar{\cM}_g(\alpha)$ has codimension at least $2$. Then
for $c(\alpha):=\frac{13\alpha-13}{2-\alpha}$, some multiple of the $\mathbb{Q}$-line bundle $c(\alpha)\lambda_1+\lambda_2$ descends to a $\mathbb{Q}$-line bundle on 
$\M_g(\alpha)$.
\end{lem}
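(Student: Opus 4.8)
The plan is to recognize $c(\alpha)\lambda_1+\lambda_2$ as, up to a positive rational rescaling, the log canonical $\mathbb{Q}$-divisor $K_{\bar{\cM}_g(\alpha)}+\alpha\delta$, and then to invoke the modularity principle to see that this divisor descends. First I would record the algebraic identity. Under the hypotheses — $\bar{\cM}_g(\alpha)$ is $S_2$ and its worse-than-nodal locus has codimension at least $2$ — the classes $\lambda_1,\lambda_2,\kappa,K,\delta$ are all defined on $\bar{\cM}_g(\alpha)$ and the relations \eqref{relations} hold there; in particular $\delta=13\lambda_1-\lambda_2$ and $K=13\lambda_1-2\delta$, so that
\[
K+\alpha\delta \;=\; 13\lambda_1-(2-\alpha)\delta \;=\; 13(\alpha-1)\lambda_1+(2-\alpha)\lambda_2 .
\]
Since $2-\alpha>0$, dividing by $2-\alpha$ gives $\tfrac{1}{2-\alpha}(K+\alpha\delta)=c(\alpha)\lambda_1+\lambda_2$ as $\mathbb{Q}$-line bundles on $\bar{\cM}_g(\alpha)$ (which incidentally shows $c(\alpha)\lambda_1+\lambda_2$ is a well-defined class under our hypotheses, as $\lambda_2$ is). It therefore suffices to prove that $K_{\bar{\cM}_g(\alpha)}+\alpha\delta$ descends to a $\mathbb{Q}$-line bundle on $\bar{M}_g(\alpha)$.

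For the descent I would use that, by the modularity principle, $\phi\co\bar{\cM}_g(\alpha)\to\bar{M}_g(\alpha)$ is a good moduli space morphism with $\phi_*\cO_{\bar{\cM}_g(\alpha)}=\cO_{\bar{M}_g(\alpha)}$, $\phi$ cohomologically affine, and $\bar{M}_g(\alpha)=\Proj\bigoplus_{d\ge 0}\HH^0(\bar{\cM}_g,\lfloor d(K_{\bar{\cM}_g}+\alpha\delta)\rfloor)$. Both $\bar{\cM}_g$ and $\bar{\cM}_g(\alpha)$ are open substacks of $\cU_g$, so the two incarnations of $K+\alpha\delta$ agree on their common open substack; extending the reflexive sheaves $\cO_{\bar{\cM}_g(\alpha)}(\lfloor d(K+\alpha\delta)\rfloor)$ across the codimension-$\ge 2$ complements (this is where the $S_2$ and codimension hypotheses are used) and computing global sections through the cohomologically affine morphism $\phi$, one identifies the section ring of $K_{\bar{\cM}_g(\alpha)}+\alpha\delta$ on the stack with the graded ring defining $\bar{M}_g(\alpha)$. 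Taking $N$ sufficiently divisible then yields $N(K_{\bar{\cM}_g(\alpha)}+\alpha\delta)=\phi^*\cO_{\bar{M}_g(\alpha)}(N)$, i.e. $K_{\bar{\cM}_g(\alpha)}+\alpha\delta$ descends; combined with the first paragraph, some multiple of $c(\alpha)\lambda_1+\lambda_2$ descends, as desired. (This descent is precisely the statement underlying Proposition~\ref{C:CriticalAlpha}, so one could instead simply cite it.)

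The main obstacle is the section-ring identification in the second step when the birational contraction $\bar{M}_g\dashrightarrow\bar{M}_g(\alpha)$ contracts a boundary divisor $\Delta$: to obtain an equality of section rings rather than merely an inclusion, one needs $\Delta$ to occur with nonnegative coefficient in $K_{\bar{\cM}_g}+\alpha\delta$ relative to its stable model, which is a local discrepancy computation. That nonnegativity is part of the assertion that $\bar{M}_g(\alpha)$ is the log canonical model, hence is granted by the modularity principle; the rest is the formal manipulation above.
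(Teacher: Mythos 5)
Your proposal is correct and follows essentially the same route as the paper: the identity $K_{\bar{\cM}_g}+\alpha\delta \sim (2-\alpha)\bigl(c(\alpha)\lambda_1+\lambda_2\bigr)$ obtained from $\lambda_2=13\lambda_1-\delta$, followed by descent coming from the definition of $\bar{M}_g(\alpha)$ as the log canonical model, with the $S_2$ and codimension-$2$ hypotheses used to propagate the identification across the worse-than-nodal locus. The paper packages the descent step more economically---it simply records that $f_*(K_{\bar{\cM}_g}+\alpha\delta)$ is $\QQ$-Cartier on $\bar{M}_g(\alpha)$ by definition of the log canonical model and checks $\phi^*f_*(K_{\bar{\cM}_g}+\alpha\delta)=c(\alpha)\lambda_1+\lambda_2$ on the common open substack---rather than re-deriving the section-ring identification (and the attendant discrepancy issue) that you sketch, but the substance is the same.
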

\begin{proof}
Since  $\lambda_2 = 13\lambda_1-\delta$ on $\bar{\cM}_{g}$, we have
$K_{\bar{\cM}_{g}}+\alpha\delta \sim c(\alpha)\lambda_1+\lambda_2.$
Now consider the commutative diagram
\[
\xymatrix{
\bar{\cM}_{g} \ar[d] \ar@{-->}[r] &\bar{\cM}_{g}(\alpha)\ar[d]^{\phi}\\
\bar{M}_{g} \ar@{-->}[r]^{f}&\bar{M}_g(\alpha)\\
}
\]
By the definition of $\M_{g}(\alpha)$, the divisor class $K_{\bar{\cM}_{g}}+\alpha\delta$ pushes forward to a $\QQ$-Cartier divisor class on $\M_{g}(\alpha)$. We claim that $\phi^* f_*(K_{\bar{\cM}_{g}}+\alpha\delta)=c(\alpha)\lambda_1+\lambda_2$ on $\bar{\cM}_g(\alpha)$. Evidently, this equality holds on $\bar{\cM}_g(\alpha) \cap \bar{\cM}_{g}$. Since the complement of $\bar{\cM}_g(\alpha) \cap \bar{\cM}_{g}$ has codimension $2$ in $\bar{\cM}_g(\alpha)$ and $\bar{\cM}_g(\alpha)$ is $S_2$, equality holds over all of $\bar{\cM}_g(\alpha)$. 

\end{proof}

As a consequence, we obtain:
\begin{prop}\label{C:CriticalAlpha}
Suppose the modularity principle for the log MMP for $\M_g$ holds for $\alpha$ 
such that $\bar{\cM}_g(\alpha)$ is $S_2$ and the locus 
of worse-than-nodal curves in $\bar{\cM}_g(\alpha)$ has codimension at least $2$.
Let $C$ be a curve in $\bar{\cM}_{g}(\alpha)$.  Let $\eta\co  \mathbb{G}_m \rightarrow \Aut(C)$ be any 
one-parameter subgroup. Then either $\chi_m(C,\eta)=0$ for all $m$ or
$$
\alpha=\frac{13-2\left(\frac{\chi_2(C,\eta)}{\chi_1(C,\eta)} \right)}{13-\left(\frac{\chi_2(C,\eta)}{\chi_1(C,\eta)}\right)} \ .
$$
In other words, either $\Aut(C)^\circ$ acts trivially on all the vector spaces $\bigwedge H^0(C, \omega_C^m)$, or else $\alpha$ is uniquely determined by the characters $\chi_1(C,\eta)$ and $\chi_{2}(C,\eta)$.
\end{prop}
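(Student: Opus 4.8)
The plan is to deduce the Proposition from Lemma \ref{L:Observations} by passing to characters along the one-parameter subgroup $\eta$. By the Lemma, under our hypotheses some multiple of $c(\alpha)\lambda_1 + \lambda_2$ descends to a $\QQ$-line bundle on $\M_g(\alpha)$. The first step is to record the general principle behind the Proposition: if a line bundle $\cL$ on $\bar{\cM}_g(\alpha)$ is pulled back from $\M_g(\alpha)$ via $\phi$, then $\cL$ is trivial in a $\GG_m$-equivariant neighborhood of any point $[C]$ whose automorphism group contains a nontrivial torus $\eta(\GG_m)$, because $\phi$ contracts the orbit of $[C]$ (equivalently, $[C]$ and its degeneration to the fixed point of $\eta$ have the same image in the good moduli space). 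Hence $\chi_{\cL}(C,\eta) = 0$. I would state this precisely as: for any $\QQ$-line bundle $\cL$ descending to $\M_g(\alpha)$, one has $\chi_{\cL}(C,\eta)=0$.

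Next I would apply this to $\cL = c(\alpha)\lambda_1 + \lambda_2$ (or the integral multiple that genuinely descends; since characters are additive and scale linearly under taking powers, working with the $\QQ$-character is harmless). Using additivity of $\chi$ in $\cL$ and the notation $\chi_m(C,\eta) := \chi_{\lambda_m}(C,\eta)$, this gives the single scalar equation
\[
c(\alpha)\,\chi_1(C,\eta) + \chi_2(C,\eta) = 0,
\]
i.e. $\frac{13\alpha - 13}{2-\alpha}\,\chi_1(C,\eta) + \chi_2(C,\eta) = 0$. Now there is a dichotomy. If $\chi_1(C,\eta) = 0$, the equation forces $\chi_2(C,\eta)=0$ as well; more generally one expects all $\chi_m(C,\eta)$ to vanish, which is the statement that $\Aut(C)^\circ$ acts trivially on every $\bigwedge H^0(C,\omega_C^m)$ — I would justify this by noting that if $\chi_1 = \chi_2 = 0$ then, running the same descent argument for every $\lambda_m$ together with the fact that on the relevant open substack $\lambda_m$ is a polynomial combination of $\lambda_1,\lambda_2$ (via Grothendieck–Riemann–Roch / the relations in \eqref{relations} and their higher analogues), all characters vanish; alternatively this case is exactly the degenerate branch and can simply be recorded. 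If $\chi_1(C,\eta)\neq 0$, I divide through by $\chi_1(C,\eta)$, set $r := \chi_2(C,\eta)/\chi_1(C,\eta)$, and solve $\frac{13\alpha-13}{2-\alpha} + r = 0$ for $\alpha$:
\[
13\alpha - 13 + r(2-\alpha) = 0 \quad\Longrightarrow\quad \alpha(13 - r) = 13 - 2r \quad\Longrightarrow\quad \alpha = \frac{13 - 2r}{13 - r},
\]
which is the asserted formula, and it exhibits $\alpha$ as uniquely determined by the ratio $\chi_2/\chi_1$ (noting $13 - r \neq 0$, since otherwise the descent equation would read $13 - 2r = -13 \neq 0$, a contradiction). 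Since only the ratio appears, the sign ambiguity in the definition of $\chi_m(C)$ when $\Aut(C)\simeq\GG_m$ is irrelevant, consistent with the footnote.

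The main obstacle is the first step: justifying rigorously that a line bundle descending through the good moduli space map $\phi$ has trivial character at a point with positive-dimensional stabilizer. This is morally the statement that the fiber of $\phi$ over $\phi([C])$ is $\GG_m$-invariant and that $\phi$ is invariant, so the $\GG_m$-action on the fiber $\cL|_{[C]}$ must be trivial because it is pulled back from a point; but to make it precise one needs to use that $\phi$ is cohomologically affine with $\phi_*\cO = \cO$, that $[C]$ and the $\eta$-limit $\lim_{z\to 0}\eta(z)\cdot[C]$ (if it exists in $\bar{\cM}_g(\alpha)$) are identified by $\phi$, and standard facts about good moduli spaces from \cite{alper_good_arxiv} — in particular that closed points of $\M_g(\alpha)$ correspond to closed $\GG_m$-orbits, forcing the relevant limit to exist and the character to vanish. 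Everything after that is the elementary algebra above. Since this descent/triviality assertion is "the essence of Proposition \ref{C:CriticalAlpha}" as flagged in the introduction, I would expect the author to isolate it cleanly, perhaps citing the good moduli space formalism, and then the rest of the proof is the one-line computation.
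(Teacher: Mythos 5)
Your proposal is correct and follows essentially the same route as the paper: invoke Lemma \ref{L:Observations} to get that $c(\alpha)\lambda_1+\lambda_2$ descends, conclude that $\eta$ acts trivially on its fiber at $[C]$ so that $c(\alpha)\chi_1(C,\eta)+\chi_2(C,\eta)=0$, and then solve for $\alpha$. The paper states the descent-implies-trivial-character step without elaboration, whereas you (reasonably) flag it as the point needing the good moduli space formalism; otherwise the arguments coincide.
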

\begin{proof} Set $c(\alpha)=\frac{13\alpha-13}{2-\alpha}$.
By Lemma \ref{L:Observations}, the line bundle $c(\alpha)\lambda_1+\lambda_2$ must descend from $\bar{\cM}_{g}(\alpha)$ to $\bar{M}_g(\alpha)$. In particular, the one-parameter subgroup $\eta\co  \mathbb{G}_m \rightarrow \Aut(C)$ must act trivially on the fiber $(c(\alpha)\lambda_1+\lambda_2)|_{[C]}$. Thus
 the character for this action, given by $c(\alpha)\chi_1(C,\eta)+\chi_2(C,\eta)$, must be $0$. 
 We conclude that either $\chi_1(C,\eta)=\chi_2(C,\eta)=0$, or
$$
c(\alpha)=-\chi_2(C,\eta)/\chi_{1}(C,\eta),
$$
as desired.
\end{proof}

Evidently, Proposition \ref{C:CriticalAlpha} says nothing whatsoever concerning curves with finite automorphisms. 
At critical values of $\alpha$ however, the stacks $\bar\cM_{g}(\alpha)$ typically contain curves 
admitting a $\GG_m$-action. 
We define the \emph{$\alpha$-value} of a complete curve $C$ with a $\mathbb{G}_m$-action $\eta\co \mathbb{G}_m \rightarrow \Aut(C)$ as
$$
\alpha(C, \eta):= \frac{13-2\left(\frac{\chi_2(C,\eta)}{\chi_1(C,\eta)} \right)}{13-\left(\frac{\chi_2(C,\eta)}{\chi_1(C,\eta)}\right)} \ . 
$$
We note that $\alpha(C, \eta) = 2 - 13 \chi_{\lambda}(C, \eta) / \chi_{\delta}(C, \eta)$ as long as the deformation space of $C$ is $S_2$ and the locus of worse-than-nodal curves has codimension at least 2.  

Proposition \ref{C:CriticalAlpha} implies that the $\alpha$-value of any complete curve 
with $\GG_m$-action
is the {\em only} $\alpha$ at which 
the curve can show up in $\Mg{g}(\alpha)$. 
Note also that whenever $\Mg{g}(\alpha)$ is constructed as a GIT quotient, the necessary condition for 
$[C]$ to be semistable is that the character of $K_{\Mg{g}(\alpha)}+\alpha\delta$ is $0$,
as this character computes the 
Hilbert-Mumford index of $[C]$ with respect to $\eta$. We discuss a connection of the character
theory with GIT in Section \ref{section-git}.



Next, we explain how to define and extract critical $\alpha$-values for an arbitrary curve singularity with a
$\GG_m$-action. Given a curve singularity $\hat{\cO}_{C,p}$ with $n$ branches and $\delta$-invariant $\delta(p)$, we 
may consider a curve of the form
$$
C=E_1 \cup \ldots \cup E_n \cup C_0,
$$
where $C_0$ is any smooth curve of genus $g-\delta(p)-n+1$ and $E_1, \ldots, E_n$ are rational curves attached to $C_0$ nodally and meeting in a singularity analytically isomorphic to $\hat{\cO}_{C,p}$ (see Figure \ref{F:J10}).

\begin{figure}[hbt]
\begin{centering}
\begin{tikzpicture}[scale=0.55]
		\node [style=black] (0) at (-12, 3.5) {};
		\node [style=black] (e1) at (-12, 4) {$E_1$};
		\node [style=black] (1) at (-11, 3.5) {};
		\node [style=black] (e2) at (-11, 4) {$E_2$};
		\node [style=black] (2) at (-10, 3.5) {};
		\node [style=black] (e3) at (-10, 4) {$E_3$};
		\node [style=black] (3) at (-8, 3.5) {};
		\node [style=black] (4) at (-7, 3.5) {};
		\node [style=black] (5) at (-6, 3.5) {};
		\node [style=black] (6) at (-8.5, 3) {};
		\node [style=black] (7) at (-6, 2.5) {};
		\node [style=black] (8) at (-3.5, 2) {};
		\node [style=black] (c0) at (-3, 2) {$C_0$};
		\node [style=black] (a) at (-7.5, 1.5) [label=left:{$y^3=x^6$}]{};
		\draw [very thick,bend right=60] (1.center) to (4.center);
		\draw [very thick,bend right=90, looseness=1.75] (2.center) to (3.center);
		\draw [very thick,bend right=15] (7.center) to (8.center);
		\draw [very thick,bend right=35] (0.center) to (5.center);
		\draw [very thick,bend left=15] (6.center) to (7.center);
\end{tikzpicture}
\end{centering}
\vspace{-0.5pc}
\caption{Rational curves $E_1, E_2$, and  $E_3$ meet in the monomial $y^3=x^6$ singularity.}\label{F:J10}
\end{figure}
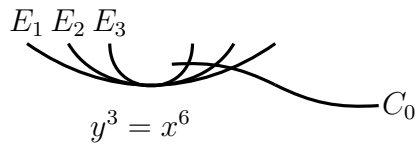

\vspace{-0.5pc}

If $\GG_m$ acts algebraically on $\hat{\cO}_{C,p}$ via $\eta$, then this 
 action extends canonically to $C$, which induces a one-parameter subgroup $\tilde \eta\co  \GG_m \rightarrow \Aut(C)$. 
The characters $\chi_1(C,\tilde \eta)$ and 
$\chi_2(C,\tilde \eta)$ depend only on the singularity $\hat{\cO}_{C,p}$ and the $\GG_{m}$-action.  
\begin{definition}
We define the \emph{$\alpha$-value of $\hat{\oh}_{C,p}$ with respect to $\eta$}, denoted by $\alpha(\hat{\cO}_{C,p}, \eta)$, as the corresponding $\alpha$-value, $\alpha(C,\tilde \eta)$, of the complete curve $C$.
\end{definition}


\begin{table}[tbh]
{\footnotesize
\renewcommand{\arraystretch}{1.7}

\begin{tabular}{| c || c | c | c  || c | c | }

\hline 
\text{Singularity type}	& $\lambda$ & $\lambda_2$ & $\delta$ &  $\alpha$-value & slope  \\
\hline
\hline
$A_{2k}: y^2 - x^{2k+1}$ & $k^2$ & $5k^2-4k+1$&  $8k^2+4k-1$ & $\frac{3k^2+8k-2}{8k^2+4k-1}$ & $\frac{8k^2+4k-1}{k^2}$ \\
\hline
$A_{2k+1}: y^2 - x^{2k+2}$ & $\frac{k^2+k}{2}$&$\frac{5k^2+k}{2}$& $4k^2+6k$& $\frac{3k+11}{8k+12}$ & $\frac{8k+12}{k+1}$\\ 
\hline
$D_{2k+1}: x(y^2-x^{2k-1})$	&$k^2$	&$5k^2-2k$	&$8k^2+2k$	& $\frac{3k+4}{8k+2}$ 	& $\frac{8k+2}{k}$ \\	
\hline
$D_{2k+2}: x(y^2-x^{2k})$	&$\frac{k^2+k}{2}$	&$\frac{5k^2+3k}{2}$	&	$4k^2+5k$ & $\frac{3k+7}{8k+10}$ & $\frac{8k+10}{k+1}$\\
\hline
$E_6: y^3-x^4$			& $8$	& $33$	& $71$ &  $38 / 71$ & $71/8$ \\
\hline
$E_7: y(y^2-x^3)$		& $7$ 	& $31$ 	& $60$ & $29/60$ & $60/7$ \\
\hline
$E_8: y^3 - x^5	$		& $14$	& $63$	& $119$ & $8/17$ & $17/2$ \\
\hline
$y^3-x^6$				& $7$ & 34 & $57$ & $23/57$ & $57/7$ \\
\hline
$y^3-x^7$				& $31$	& $152$	& $251$ 	& $99/251$ & $251/31 $ \\
\hline
$y^3 -x^8$			& $42$	& $211$	& $335$ & $124/335$ & $335/42 $ \\
\hline
$T_{p,q}: y^p-x^q $& \multicolumn{5}{|c|}{\SMALL{See  Proposition \ref{P:toric-family} for character values}}	\\
\hline
\SMALL{monomial unibranch}& \multirow{2}{*}{$\sum b_i$} &  \multirow{2}{*}{$(2k-1)^2 + \sum b_i$}
& \multirow{2}{*}{\SMALL{$12 \sum b_i - (2k-1)^2$}}  & \multirow{2}{*}{\tiny{$\dfrac{11 \sum b_i - 2 (2k-1)^2}{12 \sum b_i - (2k-1)^2}$}}
& \multirow{2}{*}{$12  - \frac{ (2k-1)^2}{\sum b_i}$} \\
\SMALL{with gaps $\{b_1, \dots, b_k\}$} & & & & & \\
\hline
\SMALL{Ribbon $C_{\ell}$}	& $g\left(\ell-\frac{g-1}{2}\right)$ & \SMALL{$(5g-4)(\ell-\frac{g-1}{2})$} & \SMALL{$(8g+4)(\ell-\frac{g-1}{2})$} & $\frac{3g+8}{8g+4}$ & $8+\frac{4}{g}$ \\
\hline
\end{tabular} }
\hfill
\caption{Character values of Gorenstein singular curves from Section \ref{S:character-computations}}
\label{T:table-characters}
\end{table}
\vspace{-1pc}

We compute the character values of all ADE, toric, and monomial unibranch Gorenstein singularities, as well as of ribbons, in Section \ref{S:character-computations}. The results are displayed in Table \ref{T:table-characters}.
We expect that the $\alpha$-values displayed in the table are the $\alpha$-values at which the given
singularity type first appears on a curve in $\bar{\cM}_{g}(\alpha)$. There are two caveats. First, the $\alpha$-value 
depends not only on the analytic isomorphism type of a singularity but also on the global geometry of the curve. 
This dependence is described in Section \ref{S:dangling} below. Second, 
there is no guarantee that at the prescribed $\alpha$-values exactly the predicted singularities appear. 
Theorem \ref{theorem-character-intersection} below is the first step towards confirming these predictions.  
In addition, we note that our predictions for when $A$ singularities arise agree with the computations of Hyeon, who uses different heuristics \cite{hyeon_predictions}.

\subsection{Dangling singularities}\label{S:dangling} \label{section-dangling}
We now explain a variant of the above ideas applied to curves with {\em dangling} singularities; such 
singular curves have not yet appeared in the work of Hassett and Hyeon but we expect them to play an important role in the future stages of the program. 
We define a collection of modified $\alpha$-values associated to a multi-branch singularity. If $\hat{\cO}_{C,p}$ is any curve singularity with $n \geq 2$ branches, we may enumerate the branches, and for any non-empty subset $S \subset \{1, \ldots, n\}$, we consider a curve of the form
$$
C^S=E_1 \cup \ldots \cup E_n \cup C_0,
$$
where each $E_i$ is $\PP^1$ with two distinguished points $0$ and $\infty$, 
each $E_i$ with $i \in S$ meets $C_0$ nodally at $\infty$, and all the $E_i$'s are glued along singularity of type 
$\hat{\cO}_{C,p}$ at $0$ (see Figure \ref{F:dangling}). 
\begin{figure}[hbt]
\begin{centering}
\begin{tikzpicture}[scale=0.38]
		\node [style=black] (n1) at (-0.5, 3) [label=left:$E_1$]{};
		\node [style= black] (1) at (-3, 2) [label=left:$E_3$]{};
		\node [style= black] (2) at (2, 2) {};
		\node [style= black] (3) at (4, 0) [label=right:$C_0$] {};
		\node [style= black] (4) at (1, -1.25) {};
		\node [style= black] (5) at (-3, -2) [label=left:$E_2$]{};
		\node [style= black] (6) at (2, -2) {};
		\node [style= black] (7) at (-2, -2.5) {};
		\node [style= black] (n2) at (-0.5, -3) {};
		\draw [very thick, bend right=75, looseness=1.41] (1.center) to (2.center); 
		\draw [very thick, bend left=75, looseness=1.41] (5.center) to (6.center); 
		\draw  [very thick] (n2.center) to (n1.center); 
		
		\draw [very thick, bend right=45] (7.center) to (4.center); 
		\draw [very thick, bend left=45] (4.center) to (3.center); 
\end{tikzpicture}
\end{centering}
\vspace{-1pc}
\caption{Dangling $D_{6}^{\{1,2\}}$-singularity.}\label{F:dangling}
\end{figure}
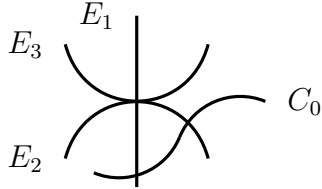
\vspace{-0.2pc}

As before, if $\GG_m$ acts algebraically on $\hat{\cO}_{C,p}$ via $\eta$, there is an induced one-parameter subgroup $\tilde \eta\co  \GG_m \rightarrow \Aut(C)$.  We define the \emph{$\alpha$-value of $\hat{\oh}_{C,p}$ with respect to $S$ and $\eta$}, denoted by $\alpha^{S}(\hat{\cO}_{C,p}, \eta)$, as the corresponding $\alpha$-value, $\alpha(C^S,\tilde \eta)$, of the complete curve $C^S$.
In this notation, $\alpha^{[n]}(\hat{\cO}_{C,p})$ is the standard $\alpha$-invariant defined above. In general, the invariants $\alpha^{S}(\hat{\cO}_{C,p})$ will depend on the subset $S$, which reflects the fact that curves $C^S$ may appear in the moduli stack $\bar{\cM}_{g}(\alpha)$ at different values of $\alpha$. 
\begin{figure}[hbt]
\begin{centering}
\begin{tikzpicture}[scale=0.4]
		\node [style=black] (0) at (-8, 4)  {};
		\node [style=black] (y) at (-7.6, 4.2) [label=left:{\SMALL $C_0$}] {};
		\node [style=black] (1) at (-9, 2) {};
		\node [style=black] (x) at (-8.7, 2) [label=left:{\small ${g=2}$}] {};
		\node [style=black] (2) at (-6, 1.5) {};
		\node [style=black] (z) at (-6.5, 1.7) [label=right:$\PP^1$] {};
		\node [style=black] (3) at (-9, 1) {};
		\node [style=black] (4) at (-8, 1) {};
		\node [style=black] (5) at (-10, 0) {};
		\node [style=black] (6) at (-8, 0) {};
		\node [style=black] (7) at (-7, 0) {};
		\node [style=black] (8) at (-6, 0) {};
		\node [style=black] (9) at (-12.5, -1) {};
		\node [style=black] (d) at (-12.1, -1)  [label=left:{\SMALL $C_0$}] {};
		\node [style=black] (10) at (-11, -1) {};
		\node [style=black] (11) at (-5, -1) {};
		\node [style=black] (12) at (-4, -1) {};
		\node [style=black] (w) at (-2.1, -1) {};
		\node [style=black] (r) at (-1.9, -1) [label=left:{\SMALL $C_0$}] {};
		\node [style=black] (13) at (-13.5, -3) {};
		\node [style=black] (q) at (-13.1, -3) [label=left:{\small ${g=2}$}] {};
		\node [style=black] (14) at (-13.5, -4) {};
		\node [style=black] (15) at (-12.5, -4) {};
		\node [style=black] (16) at (-5, -4) {};
		\node [style=black] (17) at (-11.5, -5) {};
		\node [style=black] (18) at (-4, -5) {};
		\node [style=black] (eq) at (-4, -3) [label=right:{\small ${y^2=x^{6}}$}] {};
		\node [style=black] (19) at (-3, -5) {};
		\node [style=black] (e) at (-3.3, -5) [label=right:$\PP^1$] {};
		\draw [very thick,bend right=300] (12.center) to (16.center);
		\draw [very thick, out=105, looseness=5.80, in=105] (18.center) to (19.center);
		\draw [very thick,bend right=300] (0.center) to (3.center);
		\draw [very thick,bend right=45, looseness=1.5] (15.center) to (17.center);
		\draw [very thick, bend right=45, looseness=1.5] (4.center) to (7.center);
		\draw [very thick, bend left=45, looseness=1.5] (1.center) to (4.center);
		\draw [very thick, ->]  (5.center) to (10.center);
		\draw [very thick, bend left=45, looseness=1.5] (13.center) to (15.center);
		\draw [very thick, bend right=300] (9.center) to (14.center);
		\draw  [very thick] (6.center) to (2.center);
		\draw [very thick, ->] (8.center) to (11.center);
\end{tikzpicture}
\end{centering}
\caption{Given a smoothing of a curve with a genus $2$ tail attached at an arbitrary point $p$, after blowing up the conjugate point of $p$ and contracting the genus $2$ curve, one obtains a dangling $\PP^1$ attached at an oscnode.}\label{F:dangling-genus-2}
\end{figure}
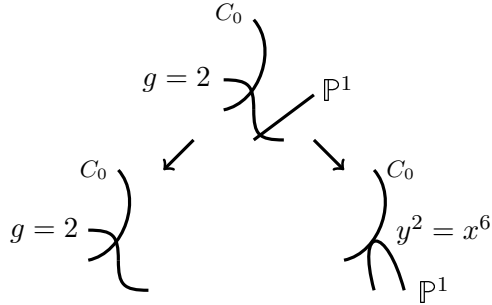

{\footnotesize
\begin{table}[htb] 
\renewcommand{\arraystretch}{1.6}
\begin{tabular}{|c || c | c | c  || c | c |}
\hline
\text{Dangling type}	& $\lambda$ & $\lambda_2$ & $\delta$ &  $\alpha$-value & slope  \\
\hline
\hline
$A_{2k}^{\{\}}: y^2 - x^{2k+1}$ & $k^2$ 	& $5k^2-4k$		& $8k^2+4k$	& $\frac{3k^2+8k}{8k^2+4k}$	& $\frac{8k+4}{k}$\\ 
\hline
$A_{2k+1}^{\{\}}: y^2 - x^{2k+2}$ & $\frac{k^2+k}{2}$ 	& $\frac{5k^2+k-4}{2}$		& $4k^2+6k+2$ & $\frac{3k^2+11k+8}{8k^2+12k+4}$ & $\frac{8k^2+12k+4}{k^2+k}$\\ 
\hline
$A_{2k+1}^{\{1\}}: y^2 - x^{2k+2}$ & $\frac{k^2+k}{2}$ 	& $\frac{5k^2+k-2}{2}$	& $4k^2+6k+1$ & $\frac{3k^2+11k+4}{8k^2+12k+2}$ & $\frac{8k^2+12k+2}{k^2+k}$\\ 
\hline
$D_{2k+1}^{\{1\}}: x(y^2-x^{2k-1})$	&$k^2$ 	& $5k^2-2k-1$	& $8k^2+2k+1$ & $\frac{3k^2+4k+2}{8k^2+2k+1}$& $\frac{8k^2+2k+1}{k^2}$\\	
\hline
$D_{2k+2}^{\{1\}}: x(y^2-x^{2k})$	& $\frac{k^2+k}{2}$ & $\frac{5k^2+3k-4}{2}$ & $4k^2+5k+2$ 	& $\frac{3k^2+7k+8}{8k^2+10k+4}$ 	& $\frac{8k^2+10k+4}{k^2+k}$\\
\hline
$D_{2k+2}^{\{1,2\}}: x(y^2-x^{2k})$	& $\frac{k^2+k}{2}$ 	&  $\frac{5k^2+3k-2}{2}$	& $4k^2+5k+1$ &$\frac{3k^2+7k+4}{8k^2+10k+2}$	& $\frac{8k^2+10k+2}{k^2+k}$  \\
\hline
$E_6^{\{\}}: y^3-x^4$			& $8$	& $32$	& $72$ &  $5 / 9$ & $9$ \\
\hline
$E_7^{\{1\}}: y(y^2-x^3)$		& $7$ 	& $30$ 	& $61$ & $31/61$ & $61/7$ \\
\hline
$E_8^{\{\}}: y^3 - x^5	$		& $14$	& $62$	& $120$ & $29/60$ & $60/7$ \\
\hline
\hline
Dangling chains (see \ref{S:dangling-chains}) & \multicolumn{2}{|c|}{$\lambda$} & $\delta$ \\
\cline{1-4}
\cline{1-4}

$A_{2i+1/2j+1}$ & \multicolumn{2}{|c|}{$\frac{j^2+j-i^2-i}{2}$}	& $4j^2+6j-4i^2-6i+1$	\\ 
\cline{1-4}
$A_{2i+1/2j}$ & \multicolumn{2}{|c|}{$j^2-(\frac{i^2+i}{2})$}		& $8j^2+4j-4i^2-6i-1$ \\
\cline{1-4}
\end{tabular}
\smallskip
\caption{Character values for dangling ADE singularities from Section \ref{S:character-computations} }
\label{T:table-dangling}
\end{table}
}
The first example of this phenomenon should occur with the oscnode $(y^2=x^6)$. As seen in Table \ref{T:table-dangling}, the $\alpha$-invariant of the oscnode is $17/28$ reflecting the fact that we expect oscnodes to replace genus $2$ bridges attached by conjugate points. By contrast, the $\alpha$-invariant of the dangling oscnode $A_{5}^{\{1\}}$ is $19/29$. The key point is that this is precisely the threshold $\alpha$-value at which $\Delta_2$ is covered by $(K+\alpha\delta)$-negative curves, and indeed one can replace arbitrary genus $2$ tails by a dangling oscnode, using the blow-up/blow-down procedure pictured in Figure \ref{F:dangling-genus-2}.

While it would be too laborious to compute the associated $\alpha^{S}$-values even for toric planar 
singularities, we will do a sample in order to given an indication. In Table \ref{T:table-dangling}, we list 
$\alpha$-values for all dangling ADE singularities.  Note that since the branches of any $A_k$ or toric singularity are isomorphic the only relevant feature of the subset $S\subset \{1, \ldots, n\}$ is the size. For $D_{2k+1}$ singularities, we use the labeling ``1'' for the smooth branch and ``2'' for the singular branch, and for $D_{2k+2}$-singularities, we use ``1'' for the smooth branch with unique tangent direction and ``2,3'' for the tangent branches. 
Similarly for the $E_7$ singularity, we use the labeling ``1'' for the smooth branch and ``2" for the singular branch.

\subsection{Chains of dangling singularities}\label{S:dangling-chains}
We also predict that
 in future steps in the log MMP for $\bar{M}_g$ it will be necessary to parameterize curves admitting certain chains of dangling singularities.  Rather than defining a general theory of chains of dangling singularities, we will introduce two particular sequences which we anticipate will arise before $\alpha = 5/9$.

 We will say that a genus $g$ curve $C$ has an {\em $A_{2i+1/2j+1}$-singularity} (resp. {\it $A_{2i+1/2j}$-singularity}) if $C$ is of the form
 $$C = C_0\cup E_1 \cup E_2 \cup E_3  \qquad (\text{resp. }\, C = C_0\cup E_1 \cup E_2 ),$$
 where $C_0$ is a genus $g-i-j$ curve, each $E_k$ is a smooth rational curve,  $E_1$ meets $C_0$ at a node, $E_2$ meets $E_1$ at an $A_{2i+1}$ singularity, and $E_3$ meets $E_2$ at an $A_{2j+1}$ singularity (resp.  $E_2$ has a monomial $A_{2j}$-singularity); see Figure \ref{Fig:dangling-slash}.
\begin{figure}[hbt]
\begin{centering}
\begin{tikzpicture}[scale=0.7]
		\node  (0) at (-4, 2.5) {};
		\node  (1) at (-3, 2.5) {};
		\node  (2) at (-2, 2.5) {};
		\node  (3) at (-1, 2.5) {};
		\node  (4) at (0, 2.5) {};
		\node  (5) at (1, 2.5) {};
		\node  (6) at (-3.75, 1.75) {};
		\node  (7) at (-3, 1.75) {};
		\node  (8) at (-0.75, 1.75) {};
		\node  (9) at (0, 1.75) {};
		\node  (10) at (-3.75, 1) {};
		\node  (11) at (-3, 1) {};
		\node  (b) at (-3.25, 1.5) [label=right:{\footnotesize ${A_{2i+1}}$}] {};
		\node  (12) at (-0.75, 1) {};
		\node  (13) at (0, 1) {}; 
		\node  (a) at (-0.25, 1.5) [label=right:{\footnotesize ${A_{2i+1}}$}] {};
		
		\node  (c) at (-0.25, 0.2) [label=right:{\footnotesize ${A_{2j}}$}] {};
		\node  (d) at (-3.25, 0.2) [label=right:{\footnotesize ${A_{2j+1}}$}] {};	
		\node  (14) at (-3.75, 0.5) {};
		\node  (15) at (-3, 0.5) {};
		\node  (16) at (-0.75, 0) {};
		\node  (17) at (0, 0) {};
		\node  (18) at (-3.75, -0.25) {};
		\node  (19) at (-3, -0.25) {};
		\node  (20) at (-0.75, -0.75) {};
		\node  (21) at (-3, -1) {};
		\node  (22) at (0, -1) {};
		\node  (23) at (0, -1.75) {};
		\draw [very thick, out=-90, looseness=1.50, in=15] (13.center) to (16.center);
		\draw [very thick, out=0, looseness=1.25, in=82] (16.center) to (22.center);
		\draw [very thick, bend right=45] (3.center) to (5.center);
		\draw [very thick, bend left=90, looseness=1.75] (18.center) to (19.center);
		\draw [very thick, bend left=90, looseness=1.75] (12.center) to (13.center);
		\draw [very thick, bend left=90, looseness=1.75] (10.center) to (11.center);
		\draw [very thick] (1.center) to (7.center);
		\draw [very thick, bend left=90, looseness=1.50] (7.center) to (6.center);
		\draw [very thick, bend right=45] (0.center) to (2.center);
		\draw [very thick] (11.center) to (15.center);
		\draw [very thick, bend right=90, looseness=1.50] (14.center) to (15.center);
		\draw [very thick] (4.center) to (9.center);
		\draw [very thick] (19.center) to (21.center);
		\draw [very thick, bend left=90, looseness=1.50] (9.center) to (8.center);
\end{tikzpicture}
\end{centering}
\vspace{-2pc}
\caption{ $A_{2i+1/2j+1}$ and $A_{2i+1/2j}$-singularities.}\label{Fig:dangling-slash}
\end{figure}
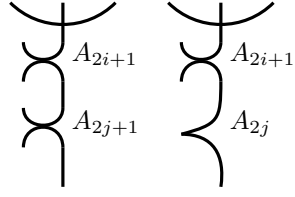 

\section{Predictions for the log MMP for $\M_g$}
\label{S:predictions}
Using heuristics provided by both intersection theory and character computations, we offer predictions 
in Table \ref{table-predictions} for modular interpretations of $\Mg{g}(\alpha)$ for $\alpha \ge 5/9$. 
(For small $g$ these predictions have to be modified. 
For example, in $\M_3$ Weierstrass genus $2$ tails are also elliptic tails and so
$\alpha=2/3$ is not a threshold value, as can be seen from \cite{hyeon-lee_genus3}.)
\addtocounter{footnote}{1}
\footnotetext[\value{footnote}]{Although it goes beyond the scope of this paper, the replacement of genus $2$
bridges requires both $D_6^{\{1,2\}}$ singularities {\em and} certain non-reduced double line bridges;
in the case of genus $5$ this is worked
out explicitly in \cite{fedorchuk-genus5}.}
{\footnotesize
\begin{table}[htb]
\renewcommand{\arraystretch}{1.3}
\begin{tabular}{l || c | c}
\multirow{2}{*}{$\alpha$-value}    & Singularity type    & \multirow{2}{*}{Locus removed at
$\alpha - \epsilon$}   \\
                &  added at $\alpha$\\
\hline
\hline
$9/11$            & $A_2$                & \small{elliptic tails attached nodally}\\
\hline
$7/10$            & $A_3$                & \small{elliptic bridges attached nodally } \\
\hline
$2/3$            & $A_4$                & \small{genus $2$ tails attached nodally at a Weierstrass point}  \\
\hline
\multirow{2}{*}{$19/29$}
        & $A_5^{\{1\}}$            & \small{genus $2$ tails attached nodally}\\
        & $A_{3/4}$                & \small{genus $2$ tails attached tacnodally at a Weierstrass point}\\
\hline
$12/19$
        & $A_{3/5}$                & \small{genus $2$ tails attached tacnodally}\\       
       
\hline
$17/28$        & $A_5$                & \small{genus $2$ bridges attached nodally at
conjugate points}         \\
\hline
$49/83$        & $A_6$                & \small{hyperelliptic genus $3$ tails attached nodally at
a Weierstrass point}            \\
\hline
$32/55$        & $A_7^{\{1\}}$            & \small{hyperelliptic genus $3$ tails attached
nodally}        \\
\hline
$42/73$        & $A_{3/6}$            & \small{hyperelliptic genus $3$ tails attached
tacnodally at a Weierstrass point}        \\
\hline
\multirow{7}{*}{$5/9$}                    & $D_4$    &    \small{elliptic triboroughs attached nodally}   \\
        &$D_5$    & \small{genus $2$ bridges attached nodally at a
Weierstrass and free point}\\

& $D_6^{\{1,2\}}$     & \small{genus $2$ bridges attached
nodally at two free points$^{\decimal{footnote}}$ } \\
       
& $A_{3/7}$                & \small{hyperelliptic genus $3$ tails attached tacnodally}\\
    & $A_{5/7}$                & \small{hyperelliptic genus $3$ tails attached oscnodally}\\
        & $A_{5/6}$                & \small{hyperelliptic genus $3$ tails attached oscnodally at a Weierstrass point}\\
       
& $A_7$                & \small{hyperelliptic genus $3$ bridges attached
nodally at conjugate points} \\
\end{tabular}
\medskip
\caption{Predictions for the log MMP for $\alpha \ge 5/9$}
\label{table-predictions}
\end{table}
}
\begin{remark}\label{R:chamber-remark}
There are in fact other singular curves with $\GG_m$-action whose characters allow them to appear
in $\Mg{g}(\alpha)$ for $\alpha\in [5/9, 1]$. However, they can be excluded by geometric considerations.
Heuristically, a necessary condition for a curve to appear in $\Mg{g}(\alpha)$ is that it is an isotrivial specialization
of curves in $\Mg{g}(\alpha+\epsilon)$. For example, an $A_{5/4}$-curve has $\alpha$-value $9/11$ but obviously
is not an isotrivial specialization of any stable curve.
\end{remark}

Giving a complete description of $\bar{\cM}_g(\alpha)$ is much more subtle than generally describing the singularities added and the loci removed.  For instance, after removing elliptic tails (connected genus $1$ subcurves which are attached nodally) at $\alpha = 9/11 - \epsilon$, in each subsequent moduli stack parameterizing curves with additional singularities, one needs to redefine what is meant by an elliptic tail by specifying the allowed attaching singularities.

\section{Character theory computations}
\label{S:character-computations}
\subsection{Computing the characters of $\lambda$ and $\lambda_2$}
Suppose we are given a curve $C$ and a one-parameter subgroup $\eta\co \GG_m \to \Aut(C)$. Then there is an induced $\GG_m$-action on the sequence of one-dimensional vector spaces
$$
\lambda_{m}|_{[C]}:=\bigwedge \HH^0(C, \omega_{C}^m).
$$
For many classes of singularities, the induced character $\chi_{m}(C, \eta) \in \mathbb{Z}$ of this action can be explicitly computed.  In this section, we will calculate these characters for $A_{2k}$, $D_{2k+2}$ singularities, elliptic $m$-fold points, monomial unibranch Gorenstein singularities, and ribbons.  The same procedure can be used to compute these characters for all ADE and toric singularities, and these results are listed in Table \ref{T:table-characters}. 

Throughout this section, we will use the following basic result about the dualizing sheaf $\omega_C$ 
of a reduced singular curve $C$.
Let $\nu\co \tilde{C} \rightarrow C$ be the normalization of $C$ and consider the sheaf $\Omega_{\tilde{C}} \otimes 
K(\tilde{C})$ of rational differentials on $\tilde{C}$. 
Then $\omega_C\subset \nu_*\bigl(\Omega_{\tilde{C}} \otimes K(\tilde{C})\bigr)$ is the subsheaf of {\em Rosenlicht differentials} 
defined as follows: A differential $\omega\in \nu_* \bigl(\Omega_{\tilde{C}} \otimes K(\tilde{C}) \bigr)$ is Rosenlicht at 
$p\in C$ 
if for every function $f \in \cO_{C,p}$
$$
\sum_{p_i \in \nu^{-1}(p)}\text{Res}_{\, p_i} \, (f \,\omega)=0.
$$
See \cite[Prop.6.2]{barth} for the proof of this fact in the 
analytic setting or \cite[Ch.IV]{serre-corps} for a general discussion of duality on singular curves.

\begin{example}[$A_{2k}: y^2=x^{2k+1}$]
Let $C=C_{0} \cup E$, where $E$ is a smooth rational curve with a higher cusp $y^2=x^{2k+1}$ at zero, 
attached nodally at infinity to $p\in C_{0}$. 
If $t$ is a uniformizer at zero, then $dt/t^{2k}$ is a generator for 
$\omega_{C}$ at the cusp, and we may write down a basis for $\HH^0(C, \omega_{C})$ as follows:
$$
\left(0, \frac{dt}{t^{2k}}\right),\left(0, \frac{dt}{t^{2k-2}}\right), \ldots, \left(0, \frac{dt}{t^2}\right), (\omega_{1}, 0), \ldots, \left(\omega_{g-k},0 \right),
$$
where $\omega_{1}, \ldots, \omega_{g-k}$ is a basis for $\omega_{C_0}$.
This basis diagonalizes the $\GG_m$-action 
$\eta\co t \mapsto \lambda^{-1} t$ with weights $(2k-1), (2k-3), \ldots, 1$. 
Thus, the character $\chi_{1}(C, \eta)$ is
$$
\chi_{1}=\sum_{i=1}^{k}(2i-1)=k^2.
$$
Similarly, we may write down a basis for $\HH^0(C, \omega_{C}^2)$ as
\begin{multline*}
\left(0, \frac{(dt)^2}{t^{4k}}\right),\left(0, \frac{(dt)^2}{t^{4k-2}}\right), \ldots, \left(0, \frac{(dt)^2}{t^{2k}}
\right), \left(0, \frac{(dt)^2}{t^{2k-1}}\right), \ldots, \left(w_{0}, \frac{(dt)^2}{t^{2}}\right),
\\ (w_{1}, 0), \ldots, \left(w_{3g-3k-2},0 \right),
\end{multline*}
where $w_{1}, \ldots, w_{3g-3k-2}$ is basis for $\HH^0(C_0,\omega_{C_0}^2(p))$, and 
$w_{0}$ is an appropriately chosen element of $\HH^0(C_0,\omega_{C_0}^2(2p)) \setminus \HH^0(C_0,\omega_{C_0}^2(p))$.

Thus, the character $\chi_{2}(C,\eta)$ is given by
$$\chi_{2}=\sum_{i=0}^{k-1}(2k+2i)+\sum_{i=0}^{2k-2}i =5k^2-4k+1.$$

\end{example}

\begin{example}[$D_{2k+2}: x(y^2-x^{2k})=0$]
Let $C=C_0\cup E$, where $C_0$ is a genus $g-k-2$ curve and $E=E_1\cup E_2\cup E_3$ is the union 
of three rational curves at the monomial $D_{2k+2}$ singularity.
The normalization map is given by:
\begin{align*}
\left(
\begin{matrix}
x \\ y
\end{matrix}
\right)&\rightarrow
\left(
\begin{matrix}
0 & \ \ t_2 & \ \ t_3\\
t_1& -t_2^{k} & \ \ t_3^{k}
\end{matrix}
\right). 
\end{align*}
A local generator for $\omega_C$ is $\omega_0=\left(\dfrac{2dt_1}{t_1^2}, \dfrac{dt_2}{t_2^{k+1}},  -\dfrac{dt_3}{t_3^{k+1}}\right)$.
 If $\omega_1,\dots,\omega_{g-k-2}$ is a basis of $\HH^0(C_0,\omega_{C_0})$ and 
 $v_1\in \HH^0(C_0,\omega_{C_0}(p_1+p_2))$, $v_2\in \HH^0(C_0,\omega_{C_0}(p_1+p_2+p_3))$
 are appropriately chosen differentials, then the basis
\begin{align*}
 (\omega_1,0), \dots, (\omega_{g-k-2},0), (0,\omega_0), (0, x\omega_0), 
\dots, (0, x^{k-1}\omega_0), (v_1, x^k\omega_0), (v_2, y\omega_0),
\end{align*} 
of $\HH^0(C,\omega_C)$ diagonalizes the action $(t_1,t_2,t_3) \mapsto (\lambda^{-k}t_1,\lambda^{-1} t_2, \lambda^{-1} t_3)$. Thus,
$$
\chi_{1}=1+2+\cdots+k=\frac{k(k+1)}{2}.
$$
A generator for $\omega_C^2$ is $\omega_0^2=\left(\dfrac{4(dt_1)^2}{t_1^4}, \dfrac{(dt_2)^2}{t_2^{2k+2}}, \dfrac{(dt_3)^2}{t_3^{2k+2}} \right)$, so we write out an array of $(3k+3)$ quadratic differentials with non-zero weight ($2k+1$ in 
the first column, $k+1$ in the second column, $1$ in the third):
{\large
\begin{align*}
\begin{matrix}
(\frac{4(dt_1)^2}{t_1^4}, \frac{(dt_2)^2}{t_2^{2k+2}}, \frac{(dt_3)^2}{t_3^{2k+2}})&(\frac{4(dt_1)^2}{t_1^3}, \frac{(dt_2)^2}{t_2^{k+2}}, \frac{-(dt_3)^2}{t_3^{k+2}})&(\frac{4(dt_1)^2}{t_1^2}, \frac{(dt_2)^2}{t_2^{2}}, \frac{(dt_3)^2}{t_3^{2}})\\
& & \\
(0, \frac{(dt_2)^2}{t_2^{2k+1}}, \frac{(dt_3)^2}{t_3^{2k+1}})&(0, \frac{(dt_2)^2}{t_2^{k+1}}, \frac{-(dt_3)^2}{t_3^{k+1}})&\\
\vdots&\vdots&\\
\end{matrix}
\end{align*}
}
By summing the weights, we find:
$$
\chi_2 = \sum_{i=0}^{2k}i+\sum_{i=0}^{k}i=(2k+1)(2k)/2+(k+1)k/2=\frac{5k^2+3k}{2}.
$$
\end{example}


\begin{example}[Elliptic $m$-fold points] Let $m\geq 3$. 
An elliptic $m$-fold point $E$ is a Gorenstein union of $m$ general 
lines through a point in $\PP^{m-1}$ \cite{smyth-compositio}. Every such singularity is isomorphic to 
the cone over points
 $p_1=(1,0,\dots, 0)$, $p_2=(0,1,\dots,0)$, $\dots$, 
$p_{m-1}=(0, 0, \dots, 1)$, and $p_m=(1,\dots, 1)$, with the vertex at $0\in \AA^{m-1}$. 
If $(x_1,\dots, x_{m-1})$ are coordinates centered at the vertex then the normalization map from $m$ copies
of $\PP^1$ to $E$ 
is given by 
\begin{align*}
\left(
\begin{matrix}
x_1\\
\vdots\\
\vdots\\
x_{m-1}
\end{matrix}
\right)&\rightarrow
\left(
\begin{matrix}
t_1& 0& \hdots  & 0 & t_{m}\\
0&t_2& \ddots & \vdots & t_{m} \\
\vdots& \ddots& \ddots& 0& \vdots  \\
0 & \hdots &0 & t_{m-1} & t_{m}
\end{matrix}
\right).
\end{align*}
We let $C$ be the singular curve obtained by attaching $E$ to a smooth curve $C_0$ 
nodally at points $p_1,\dots, p_m$. 
A generator for $\omega_C$ in the neighborhood of the $m$-fold point is 
$$
\omega_0
=\left(\frac{dt_1}{t_1^2}, \ldots, \frac{dt_{m-1}}{t_{m-1}^2}, -\frac{dt_m}{t_{m}^{2}}\right).
$$
In fact,  $\omega_0$ 
spans the only weight space of $\HH^0(C,\omega_C)$ with a non-zero weight.
Thus, $\chi_{1}(C)=1$. 
A generator for $\omega_{C}^2$ in the neighborhood of the $m$-fold point is 
$$
\omega_0^2=\left(\frac{(dt_1)^2}{t_1^4}, \ldots, \frac{(dt_{m-1})^2}{t_{m-1}^4},  \frac{(dt_m)^2}{t_{m}^{4}}\right),
$$
and the only weight spaces of $\HH^0(C,\omega_C^2)$ with non-zero weights are 
spanned by 
$$(\omega_0^2, 0), (x_1\omega_0^2, 0), \ldots, (x_{m-1}\omega_0^2, 0).$$
It follows that $\chi_{2}(C)=2+(m-1)=m+1$.
Thus, the $\alpha$-invariant of the elliptic $m$-fold point is
$$\alpha=\frac{11-2m}{12-m}.$$
\end{example}

\begin{example}[Monomial unibranch singularities] \label{E:monomial-unibranch} 
Let $C$ be the projective closure of the curve
$\spec k[t^{n}: n \geq 0, n\notin \{b_1,\dots,b_k\}]$. Clearly, $p_a(C)=k$,
the normalization of $C$ is $\PP^1$, and $C$ has an isolated monomial singularity at $t=0$. 
From now on we assume that $C$ is Gorenstein.
The condition for $C$ to be Gorenstein is that the gap sequence $\{b_1=1, \dots, b_k\}$ 
is symmetric: 
$$
n\in \{b_1,\dots,b_k\} \Leftrightarrow 2k-1-n\notin \{b_1,\dots,b_k\}.
$$ 
In particular, $b_k= 2k-1$. Evidently, a generator for $\omega_C$ in a neighborhood of zero is given by 
$dt/t^{b_k+1}$.  Therefore, we can write down bases
\begin{align*}
\HH^0(C, \omega_C)  &= \left\langle \frac{dt}{t^{b_1+1}}, \frac{dt}{t^{b_2+1}}, \ldots, \frac{dt}{t^{b_k+1}} \right\rangle \\
\HH^0(C, \omega_C^{2})  &= \left\langle \frac{(dt)^2}{t^{2b_k+2-j}} \quad : \quad j \in \{ 0, \ldots, 2b_k-2 \}\smallsetminus \{b_1,\dots,b_k\}\right\rangle
\end{align*}
and we compute
\begin{align}
\chi_1 	& = \sum_{i=1}^k b_i \ , \label{unibranch-1} \\
\chi_2	& = \sum_{j=0}^{2b_k-2} (2b_k-j) - \sum_{i=1}^k (2b_k-b_i)  
		 =  (2k-1)^2 + \sum_{i=1}^k b_i - 1. \label{unibranch-2}
\end{align}

In the case when $C' = C \cup E$ is the nodal union of $C$ and a genus $g-k$ curve attached at the infinity, 
the corresponding characters of $C'$ are $\chi_1 = \sum_{i=1}^k b_i$ and $\chi_2 = (2k-1)^2 + \sum_{i=1}^k b_i$.

For the toric singularity $x^p = y^q$ with $p$ and $q$ coprime, the local ring of the singularity is 
 $k[t^{pi+qj}: i,j\in \ZZ_{\geq 0}]$. The gap sequence $\{b_1,\dots, b_k\}$ 
 is the set of positive integers that cannot be expressed
as $pi+qj$ with $i,j\geq 0$. The study of this sequence, e.g. finding its cardinality and the largest element
is classically known in elementary number theory as the {\em Frobenius problem} \cite{frobenius-problem}.
It is well-known that the largest gap is $b_k=pq-p-q$. It is also 
easy to see that the gap sequence is symmetric: $n$ is a gap if and only if $pq-p-q-n$ is not a gap.
It follows that the genus of the singularity $x^p=y^q$ is $g=(p-1)(q-1)/2$. By \cite{frobenius-1} (see also \cite{frobenius-2}), the sum of gaps is 
\begin{equation}\label{E:sum-of-gaps}
\sum_{n=1}^{g} b_i=(p-1)(q-1)(2pq-p-q-1)/12.
\end{equation}
It follows from Equations \eqref{unibranch-1}-\eqref{unibranch-2} that
\begin{align*}
\chi_1&=\frac{1}{12}(p-1)(q-1)(2pq-p-q-1), \\ 
\chi_2 &= (pq-p-q)^2+\frac{1}{12}(p-1)(q-1)(2pq-p-q-1)-1.
\end{align*}
Remarkably, intersection theory calculations of Proposition \ref{P:toric-family} give an independent 
algebro-geometric proof of the highly nontrivial combinatorial Formula \eqref{E:sum-of-gaps};
see Section \ref{S:intersection-theory} below. 
\end{example}

\begin{example}[Non-reduced curves: A case study of ribbons]
\label{E:ribbons}

The character theory is particularly suited to the study of 
non-reduced Gorenstein 
schemes. Here, we treat the case of {\em ribbons}.  Ribbons occur as certain flat limits (in the Hilbert scheme) of canonically embedded smooth curves degenerating to hyperelliptic curves \cite{fong}.  Our exposition is self-contained but we refer the reader to \cite{BE} for a more systematic study of ribbons.

A ribbon 
is a scheme obtained by gluing together two copies of 
$\AA^1[\epsilon]:=\spec k[x,\eps]/(\eps^2)$. Precisely, let $U_1=\spec k[x,\eps]/(\eps^2)$
and $U_2=\spec k[y,\eta]/(\eta^2)$, and let $(U_1)_x$ and $(U_2)_y$ be the corresponding open affine
subschemes.
Then by \cite[p. 733]{BE} a ribbon of genus $g$ 
is given by a gluing isomorphism
$\varphi\co (U_1)_y \ra (U_2)_x$ defined by 
\begin{align*}
x &\mapsto y^{-1}-y^{-2}f(y)\eta, \\
\eps &\mapsto y^{-g-1}\eta,
\end{align*}
where $f(y)=f_1y^{-1}+\cdots+f_{g-2}y^{-(g-2)}\in \dfrac{k[y,y^{-1}]}{k[y]+y^{-g+1}k[y^{-1}]}$.

We focus here on non-split ribbons that admit a $\GG_m$-action. There are $g-2$ such ribbons, 
each given by $f(y)=y^{-\ell}$, for $\ell\in\{1,\dots, g-2\}$. Denote the ribbon 
corresponding to $f(y)=y^{-\ell}$ by $C_{\ell}$. Then
the $\GG_m$-action on $C_\ell$ is given by $t\cdot (x,y,\eps, \eta)=(tx, t^{-1}y, t^{g-\ell}\eps, t^{-\ell-1}\eta)$.

By adjunction, the sections of $\omega_{C_\ell}$ over $U_1$ 
are identified with restrictions to $U_1$ of $2$-forms
$f(x,\eps)\frac{dx\wedge d\eps}{\eps^2}$ on $\spec k[x,\eps]$, and the 
sections of $\omega_{C_\ell}$ over $U_2$ are identified with restrictions to $U_2$ of $2$-forms
$f(y,\eta)\frac{dy\wedge d\eta}{\eta^2}$ on $\spec k[y,\eta]$. With this in mind, we can write 
down $g$ linearly independent global sections of $\omega_{C_\ell}$:
\begin{align*}
\intertext{For $k=0,\dots, g-\ell-2$, take}
\omega_k &=x^{k}\frac{dx\wedge d\eps}{\eps^2}=-(y^{g-1-k}+(g-\ell-k-1)y^{g-\ell-k-2}\eta)\frac{dy\wedge d\eta}{\eta^2}, \\
\intertext{for $k=g-\ell-1, \dots, g-1$, take}
\omega_k &=(x^{k}+(\ell+k+1-g)x^{\ell+k-1}\eps)\frac{dx\wedge d\eps}{\eps^2}=-y^{g-1-k}\frac{dy\wedge d\eta}{\eta^2}.
\end{align*}
It follows that $\{\omega_i\}_{i=0}^{g-1}$ form the basis of $\HH^0(C_\ell,\omega_{C_\ell})$.
Note that we recover the second part of \cite[Theorem 5.1]{BE}, namely 
the identification of the sections of $\HH^0(C_\ell,\omega_{C_\ell})$ 
with functions 
\[
1, y, y^2, \dots, y^{\ell}, y^{\ell+1}+\eta, y^{\ell+2}+2y\eta, \dots, y^{g-1}+(g-\ell-1)y^{g-\ell-2}\eta,
\]  
under a trivialization of $\omega_{C_\ell}$ on $U_2$. 

We now proceed with character computations. Under the $\GG_m$-action above, we have that 
\[
t\cdot \omega_k=t^{k-g+\ell+1}\omega_k.
\]
Summing up the weights of the $\GG_m$-action on the basis $\{\omega_i\}_{i=0}^{g-1}$, we 
obtain the character of $\lambda$:
\[
\chi_{1}(C_\ell)=\sum_{k=0}^{g-1}(k-g+\ell+1)=g(\ell+1-g)+g(g-1)/2=g\left(\ell-\frac{g-1}{2}\right).
\]
It remains to compute the weights of the $\GG_m$-action on a basis of $\HH^0(C_{\ell},\omega_{C_{\ell}}^2)$ and 
the corresponding character $\chi_2(C_{\ell})$.
Since $h^0(C_{\ell},\omega_{C_{\ell}}^2)=3g-3$, it suffices to exhibit $3g-3$ linearly independent 
sections. One such choice is presented by 
\begin{align*} 1, y, y^2,\dots, y^{2\ell},  
&y^{2\ell+1}+y^\ell\eta, \dots, y^{\ell+g-1}+(g-\ell-1)y^{g-2}\eta, \\ 
& y^{\ell+g}
+(g-\ell)y^{g-1}, \dots, y^{2g-2}+(2g-2\ell-2)y^{2g-\ell-3}, \eta, y\eta, \dots, y^{g-3}\eta.
\end{align*}
In particular, taking into account that the weight of $\frac{dy\wedge d\eta}{\eta^2}$ is $2\ell$, 
we see that the weights of the $\GG_m$-action on $\HH^0(C_{\ell},\omega_{C_{\ell}}^2)$ are 
\begin{align*}
2\ell,2\ell-1, \dots, 2\ell-(2g-2),
& \ \ell-1, \dots, \ell-g+2.
\end{align*}
Summing up these weights, we obtain the character of $\lambda_2$:
\begin{align*}
\chi_2(C_\ell) &=2(2g-1)\ell-(g-1)(2g-1)+(g-2)\ell-(g-2)(g-1)/2 \\
&=(5g-4)(\ell-\frac{g-1}{2})=(5g-4)\chi_1(C_\ell).
\end{align*}
By Equation \eqref{relations}, the character of $\delta$ is $\chi_{\delta}(C_{\ell})=(8g+4)(\ell-\frac{g-1}{2})$. In particular, if $\ell\neq (g-1)/2$, then all three characters $\chi_1(C_\ell), \chi_2(C_\ell), \chi_\delta(C_\ell)$ are non-zero, and we have 
\[
\frac{\chi_{\delta}(C_{\ell})}{\chi_{\lambda}(C_{\ell})}=\frac{8g+4}{g}.
\]
\end{example}
\begin{remark}\label{R:anand} Generalizing the computations of Example \ref{E:ribbons} above,
Anand Deopurkar recently computed characters of Gorenstein 
$n$-ribbons\footnote{An $n$-ribbon is a non-reduced scheme supported on $\PP^1$ and
locally isomorphic to $U\times \spec k[\eps]/(\eps^n)$, where $U\subset \PP^1$ is affine.} 
with 
$\GG_m$-action and verified that always
$$
\chi_{\delta}=\frac{12(2g+n-1)n}{n^2+(4g-3)n+2-2g}\chi_{\lambda}.
$$
This recovers the ratio $\frac{8g+4}{g}$ for $2$-ribbons, gives the ratio
$\frac{36(g+1)}{5g+1}$ for $3$-ribbons (see Corollary \ref{C:stankova} and the subsequent discussion 
for the significance of this slope),
and more generally gives the same ratio $\chi_{\delta}/\chi_{\lambda}$ as that of 
the toric singularity $y^{n}=x^{2g/(n-1)+1}$ computed in Corollary \ref{C:toric-family} 
(note that the arithmetic genus of an $n$-ribbon 
always satisfies $n-1\mid 2g$).
\end{remark}

\subsection{Computing the characters of $\delta_i$}  \label{subsection-delta}
In this section, we illustrate how the characters of line bundles $\delta_i$ can be computed.
 If $C$ is a curve with a $\GG_m$-action such that the discriminant locus inside 
 $\Def(C)$ is Cartier, then line bundles $\delta_i$ can be 
 defined in a neighborhood of $[C]$ in $\cU_g$.  The following proposition shows that the character of 
 $\delta_i$ is precisely minus the weighted degree of the discriminant.  

\begin{prop} \label{proposition-character-delta}
Let $C$ be a complete curve with miniversal deformation space $\Spf A$ and a $\GG_m$-action 
$\eta\co \GG_m \to \Aut(C)$.
Let $D$ be a Cartier divisor defined on a neighborhood $\cU_g$ in the stack of all complete 
genus $g$ curves.
Suppose that there is a cartesian diagram
$$\xymatrix{
V(f) \ar[r] \ar[d]		& \Spf A \ar[d] \\
D \ar[r]			& \cV
}$$
such that $f \mapsto \lambda^d f$ under the induced action of 
$\GG_m = \Spec k[\lambda, \lambda^{-1}]$ on $\Spf A$.  Then $$\chi_{\cL(D)}(C, \eta) = -d.$$
\end{prop}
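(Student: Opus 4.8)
The plan is to unwind the meaning of the character $\chi_{\cL(D)}(C,\eta)$ directly in terms of the local equation $f$ of $D$ pulled back to the miniversal deformation space. First I would recall that $\cL(D)$ near $[C]$ is, by definition, the line bundle whose fiber over a point $[C']$ is generated by the local equation of $D$; more precisely, on the miniversal deformation $\Spf A$ the Cartier divisor $D$ pulls back to $V(f)$ for a single element $f \in A$, well-defined up to a unit, and $\cL(D)$ is trivialized over $\Spf A$ with $f$ the section cutting out $D$. Passing to the fiber over the closed point $[C]$, the fiber $\cL(D)|_{[C]}$ is the one-dimensional $k$-vector space $(f)/\fm(f)$ (where $\fm$ is the maximal ideal of $A$), i.e. the span of the image $\bar f$ of $f$; equivalently $\cL(D)|_{[C]}$ is canonically identified with $(I_D/\fm I_D)$, the conormal-type line.

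Next I would track the $\GG_m$-action. The one-parameter subgroup $\eta$ acts on $\Aut(C)$ hence on $\Spf A$, and by hypothesis this action scales the chosen local equation by $f \mapsto \lambda^d f$. The induced action on the fiber $\cL(D)|_{[C]}$ is the action on the line spanned by $\bar f$; since $f \mapsto \lambda^d f$, the generator $\bar f$ is an eigenvector of weight $d$. Now here is the one sign subtlety that is the main thing to get right: the character $\chi_{\cL(D)}(C,\eta)$ is defined via the action of $\GG_m$ on the fiber $\cL(D)|_{[C]}$ itself, but a section of $\cL(D)$ transforms by the \emph{inverse} of the character by which the fiber transforms (a section is a map to the total space, and the weight of the coordinate function on the fiber is opposite to the weight of a point of the fiber). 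Concretely: if a basis vector $e$ of $\cL(D)|_{[C]}$ has $\GG_m$-weight $n$, then the dual coordinate, and hence the value of a section written in the trivialization $f$, has weight $-n$. Since $f$ itself (viewed as the section cutting out $D$) has weight $d$, the fiber generator it defines has weight $-d$, so $\chi_{\cL(D)}(C,\eta) = -d$. I would state this carefully, perhaps by choosing an explicit $\GG_m$-equivariant trivialization $\cO_{\Spf A} \xrightarrow{\sim} \cL(D)|_{\Spf A}$ sending $1 \mapsto (\text{the canonical section } s_D)$, noting $s_D$ corresponds to $f$ under this, and computing weights of both $s_D$ and of the induced coframe.

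The cartesian-diagram hypothesis is used precisely to guarantee two things: that $D$ does pull back to a principal Cartier divisor $V(f)$ on the miniversal deformation (so that $f$ exists), and that the miniversal deformation is what computes $\cL(D)|_{[C]}$ — this is the standard fact that line bundles on the stack restricted to a neighborhood of $[C]$ are computed on the miniversal deformation space with its $\Aut(C)$-equivariant structure. The main obstacle, and really the only one, is the bookkeeping of the sign/duality convention; the rest is formal. I would therefore devote most of the written proof to fixing conventions (how $\chi$ is defined on fibers versus sections, matching the convention already implicit in Proposition \ref{C:CriticalAlpha} and the examples in Section \ref{S:character-computations}), and then the computation $\chi_{\cL(D)}(C,\eta) = -d$ is immediate.
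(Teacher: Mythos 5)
Your proposal is correct in outcome and takes essentially the same route as the paper: identify the equivariant structure of the local equation $f$ on $\Spf A$ and dualize once to get $\chi_{\cL(D)}(C,\eta)=-d$. The one slip is the identification $\cL(D)|_{[C]}\cong I_D/\fm I_D$: that conormal line is the fiber of $\cL(-D)$, not of $\cL(D)$, which is why your subsequent ``sections transform by the inverse'' discussion has to undo it; the paper avoids this bookkeeping by restricting the exact sequence $0 \to \cL(-D) \to \cO_{\cU_g} \to \cO_D \to 0$ to $\Spf A$, which exhibits $\cL(-D)$ as the free rank-one module generated by $f$ with coaction twisted by $\lambda^d$, so that $\chi_{\cL(-D)}=d$ and hence $\chi_{\cL(D)}=-d$ with no further sign analysis.
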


\begin{proof}
Denote by $\sigma\co A \to k[\lambda, \lambda^{-1}] \hat{\tensor} A$ the dual action of
$\GG_m$ on $\Spf A$.  The exact sequence $0 \to \cL(-D) \to
\oh_{\cU_g} \to \oh_D \to 0$ restricted to $\Spf A$ corresponds to the
exact sequence
$$ 0 \to A_{\eta} \xrightarrow{f} A \to A/f \to 0$$
where $A_{\eta}$ is the $\GG_m$-$A$-module corresponding to the
character $\GG_m \xrightarrow{d} \GG_m$; that is, $A_{\eta}$ is $A$ as
an $A$-module with coaction $a \mapsto \lambda ^d \sigma(a)$.  Therefore
$\cL(-D)|_{B \GG_m}$ corresponds to the character $\GG_m
\xrightarrow{d} \GG_m$ and $\chi_{\cL(-D)}(C, \eta) = d$.
\end{proof}

We include the computation of the character of $\delta_i$ only for certain curves with $A_{2k}$ and $D_{2k+2}$
singularities but the same approach can be applied to a large class of curves; sampling of our computations 
is given in column four of Table \ref{T:table-characters}. 

The 
characters of $\delta_i$ will depend on the global geometry of the curve.  For instance, if an $A_{2k+1}$-singularity lies 
on a connected 
component intersecting the rest of the curve at two nodes, the character of $\delta_0$ will depend on whether the 
component is separating or not.  Furthermore, if an $A_{2k+1}$-singularity lies on a rational curve attached to the rest 
of the curve at one point, which we refer to as a ``dangling'' singularity (see Section \ref{section-dangling}), the value of 
$\delta_0$ will be different from the non-dangling case.  

\begin{example}[$A_{2k+1}$-singularity: non-separating case]
Let $C = C_0 \cup E$, where $C_0$ is a smooth curve of genus $g-k$, $E = E_1 \cup E_2$ is the union of two rational curves at the monomial\footnote{This simply means that $E_1\cup E_2$ is the projective closure of the affine curve
$y^2=x^{2k+2}$.} $A_{2k+1}$ singularity at $p$, and $E_i$ intersects $C_0$ at infinity in the node $q_i$.
The versal deformation space of $C$ can be written as
$$\Def(C) = \Def(C_0, q_1, q_2) \times \Cr(\hat{\oh}_{C,p}) \times \Def(\hat{\oh}_{C,p}) \times
\Def(\hat{\oh}_{C,q_1}) \times  \Def(\hat{\oh}_{C,q_2}),$$
where $\Cr(\hat{\oh}_{C,p})$ denotes the ``crimping'' deformations (see \cite{asw_crimping} for more details); the 
$\GG_m$-action on $\Cr(\hat{\oh}_{C,p})$ can be explicitly determined but doesn't affect this calculation.
We choose coordinates $a_{0},\dots, a_{2k}$ on $\Def(\hat{\oh}_{C,p})$ so that miniversal deformation of 
$\hat{\oh}_{C,p}$
is 
$$y^2 =x^{2k+2} + a_{2k}x^{2k} + \cdots + a_1 x +a_0,$$ 
and $n_i$ on $\Def(\hat{\oh}_{C,q_i})$ so that the miniversal deformation of $\hat{\oh}_{C,q_i}$
is $zw+ n_i=0$, where $z=1/x$ in the neighborhood of $\infty$ on $E_i$.
We have a one-parameter subgroup $\eta\co  \GG_m \to \Aut(C)$ such that $\GG_m$ acts by 
$\lambda\cdot (x,y)=(\lambda^{-1} x, \lambda ^{-k-1}y)$, 
$a_i \mapsto \lambda^{i-2k-2} a_i$, and $n_i \mapsto \lambda n_i$.  The
discriminant $\Delta\subset \Def(\hat{\oh}_{C,p})$ is given by the vanishing locus of the discriminant of the polynomial
 $x^{2k+2} + a_{2k}x^{2k}+\cdots+ a_0$. Thus it has weighted degree
$-(2k+1)(2k+2)$. The discriminant inside $\Def(\hat{\oh}_{C,q_i})$ is $\{n_i=0\}$  and has weighted degree 
$1$.  
Since $\delta_0 = V(\Delta) \cup V(n_1) \cup V(n_2)$, we conclude:
$$\chi_{\delta_0} = (2k+1)(2k+2)-2, \quad \text{and} \quad \chi_{\delta_i}
= 0 \text { for } i > 0.$$
\end{example}

\begin{example}[$A_{2k+1}$-singularity: separating case]
Let $C= C_0 \cup E$ be a curve as in the previous example with the exception that 
$C_0$ is now a {\em disconnected} curve with two connected components $C_1$ and $C_2$ of genera
$h_1$ and $h_2$, respectively. (Clearly, $g=h_1+h_2+k$.) 
Using the calculation of the previous example, we conclude from $\delta_0 = V(\Delta)$ 
and $\delta_{h_i} = V(n_i)$ that
$$\chi_{\delta_0} = (2k+1)(2k+2), \quad
\chi_{\delta_{h_1}} = \chi_{\delta_{h_2}} = -1, \quad
\chi_{\delta_{i}} = 0 \text { for } i \neq 0, h_1, h_2.$$
\end{example}

\begin{example}[$A_{2k+1}^{\{1\}}$-singularity: dangling case]
Let $C= C_0 \cup E$, where $C_0$ is a smooth curve of genus $g-k$ , $E = E_1 \cup E_2$ is the union of two rational curves at the monomial $A_{2k+1}$ singularity, and $E_1$ intersects $C_0$ at a node.  Then
$$\chi_{\delta_0} = (2k+1)(2k+2), \quad
\chi_{\delta_{k}}  = -1, \quad
\chi_{\delta_{i}} = 0 \text { for } i \neq 0, k.
$$
\end{example}

\begin{example}[$A_{2k+1}^{\{\}}$-singularity: isolated case]
Let $C=  E_1 \cup E_2$ be the union of two rational curves at the monomial $A_{2k+1}$ singularity.  Then
$$\chi_{\delta_0} = (2k+1)(2k+2), \quad
\chi_{\delta_{i}} = 0 \text { for } i > 0.
$$
\end{example}

\begin{example} [$D_{2k+2}$-singularity: non-separating case] 
Let $C = C_0 \cup E$, where $C_0$ is a genus $g-k$ curve, $E = E_1 \cup E_2 \cup E_3$ is the union of three rational curves at the monomial $D_{2k+2}$ singularity at $p \in E$, with $E_2$ and $E_3$ tangent, and each $E_i$ intersects $C_0$ at a node $q_i$.  We write
$$ \Def(C) = \Def(C_0, q_1, q_2, q_3) \times \Cr(\hat{\oh}_{C,p}) \times \Def(\hat{\oh}_{C,p}) \times \prod_{i=1}^3\Def(\hat{\oh}_{C,q_i}).$$
We can choose coordinates so that
$$\begin{aligned}
\Def(\hat{\oh}_{C,p}) &= \{xy^2=x^{2k+1} + a_{2k-1}x^{2k-1} + \cdots + a_1 x + a_0 + b y \}, \\
\Def(\hat{\oh}_{C,q_i}) &= \{z_iw_i+ n_i=0\}, \\
\end{aligned}$$
where $z_1=1/y$ and $z_2=z_3=1/x$ near $\infty$ on $E_1$, and $E_2, E_3$, respectively.
We have a one-parameter subgroup $\eta\co \GG_m \to \Aut(C)$ such that $\GG_m$ acts via 
$\lambda\cdot(x,y)=(\lambda ^{-1}x, \lambda ^{-k}y)$, and 
$$a_i \mapsto \lambda ^{i-2k-1}a_i \qquad b \mapsto \lambda ^{-k-1} b \qquad n_1 \mapsto \lambda ^{k} n_1 \qquad n_2 \mapsto \lambda n_2 \qquad n_3 \mapsto \lambda  n_3$$
The discriminant $\Delta\subset \Def(\hat{\oh}_{C,p})$ has weight $(2k+1)(2k+2)$, so we conclude that
$$\chi_{\delta_0} = (2k+1)(2k+2)-(k+2), \quad  \chi_{\delta_i}= 0 \text { for } i > 0.$$
\end{example}

\begin{example} [$D_{2k+2}^{\{1,2\}}$-singularity]
Let $C = C_0 \cup E$, where $C_0$ is a genus $g-k$ curve, $E = E_1 \cup E_2 \cup E_3$ is the union of three rational curves at the monomial $D_{2k+2}$ singularity, with $E_2$ and $E_3$ tangent, and $E_1$ and $E_2$ meet 
$C_0$ in nodes.  Using the calculation above, we conclude that
$\chi_{\delta_0} = (2k+1)(2k+2)-(k+1)$ and $\chi_{\delta_i}
= 0$ for $i \neq 0$.
\end{example}

\begin{example} [$D_{2k+2}^{\{1,2\}}$-singularity]
Let $C$ be a curve as in the previous example except that only the branch $E_1$ intersects $C_0$.  Then 
$\chi_{\delta_0} = (2k+1)(2k+2)-k$ and $\chi_{\delta_i}
= 0$ for $i \neq 0$.
\end{example}

\subsection{Computing the characters of $K$}

\begin{lem}  Let $C$ be a curve with a smooth deformation space and $\Aut(C)^\circ$ abelian.  Let $\eta \co \GG_m \to \Aut(C)$ be a one-parameter subgroup.  The character
$\chi_{K}(C, \eta)$ is the character of the determinant of $T^1(C)$ given by the natural $\GG_m$-action.
\end{lem}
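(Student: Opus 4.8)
The plan is to compute the one‑dimensional $\Aut(C)$‑representation afforded by the fibre $K_{\cM}|_{[C]} = (\det L_{\cM})|_{[C]}$ and then read off the integer by which $\eta$ acts on it.

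I would begin by restricting the cotangent complex to $[C]$; this produces a complex $L_{\cM}|_{[C]}$ of $\Aut(C)$‑representations (the point $[C]$ has automorphism group $\Aut(C)$). By the smoothness hypothesis it is concentrated in cohomological degrees $0$ and $1$, with $H^{0} = \Omega_{\cM}|_{[C]} = (T^{1}(C))^{\vee}$ and $H^{1} = (\operatorname{Lie}\Aut(C))^{\vee}$; the $\Aut(C)$‑actions are the natural ones, namely the action on $T^{1}(C)$ (first‑order deformations of $C$) and the adjoint action on $\operatorname{Lie}\Aut(C)$. Since the determinant of a perfect complex depends only on its class in $K$‑theory, this gives an isomorphism of $\Aut(C)$‑representations
\[
K_{\cM}|_{[C]} \;=\; \det H^{0}\otimes(\det H^{1})^{-1} \;=\; \bigl(\det T^{1}(C)\bigr)^{\vee}\otimes\det\bigl(\operatorname{Lie}\Aut(C)\bigr).
\]
Equivalently, one passes to the \'etale-local model $\cM\cong[\Def(C)/\Aut(C)]$ near $[C]$ and applies the quotient-stack description of the cotangent complex recalled in Section~\ref{section-discussion-K}, with $M=\Def(C)$ and $G=\Aut(C)$: this gives $K_{\cM}|_{[C]} = K_{\Def(C)}|_{o}\otimes\det(\operatorname{Lie}\Aut(C))$, and $K_{\Def(C)}|_{o} = \det(\fm_{o}/\fm_{o}^{2}) = (\det T^{1}(C))^{\vee}$ since $\Def(C)$ is smooth, $o$ being the point of $\Def(C)$ that corresponds to $C$.

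Next I would invoke the hypothesis that $\Aut(C)^{\circ}$ is abelian. As $\GG_{m}$ is connected, $\eta$ factors through $\Aut(C)^{\circ}$, and the adjoint action of the connected abelian group $\Aut(C)^{\circ}$ on its own Lie algebra $\operatorname{Lie}\Aut(C)^{\circ} = \operatorname{Lie}\Aut(C)$ is trivial, so $\eta$ acts trivially on $\det(\operatorname{Lie}\Aut(C))$. Feeding this into the displayed isomorphism, $\eta$ acts on $K_{\cM}|_{[C]}$ through its action on the line $\det T^{1}(C)$ alone; concretely, $\chi_{K}(C,\eta)$ is the sum of the $\GG_{m}$‑weights on a diagonalising basis of the deformation parameters of a miniversal family of $C$, which is the description of $\chi_{K}(C,\eta)$ asserted in the lemma.

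The only step carrying genuine content is the first one: the identification of the cohomology of $L_{\cM}|_{[C]}$ as $\Aut(C)$‑representations, equivalently the local structure of $\cM$ near $[C]$ as the stack quotient of its (smooth) miniversal deformation space by $\Aut(C)$. Once that is granted, the remaining ingredients, multiplicativity of the determinant on perfect complexes and the triviality of the $\eta$‑character on $\det(\operatorname{Lie}\Aut(C))$, are purely formal, so I do not anticipate any serious obstacle.
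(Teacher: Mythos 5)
Your proof is correct and follows essentially the same route as the paper: the paper computes $\det L_{\cM}$ at $[C]$ via the presentation $[\Hilb/\PGL_{N+1}]$ and the four-term sequence $0 \to \mathfrak{g}_x \to \mathfrak{g} \to H^0(C, N_{C/\PP^N}) \to T^1(C) \to 0$, which is exactly your identification $H^0(L_{\cM}|_{[C]}) = (T^1(C))^\vee$, $H^1(L_{\cM}|_{[C]}) = (\operatorname{Lie}\Aut(C))^\vee$ followed by taking determinants and discarding the $\operatorname{Lie}\Aut(C)$ factor via triviality of the adjoint action of the abelian group $\Aut(C)^\circ$. Your concrete reading of the answer as the sum of the $\GG_m$-weights on the miniversal deformation parameters also agrees with how the paper applies the lemma in its $A_{2k}$ example.
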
 

\begin{proof} With the notation from Section \ref{section-discussion-K},  
choose a presentation $\cM = [\Hilb / \PGL_{N+1}]$.  Fix a closed immersion $C
\hookarr \PP^N$; this determines an element $x=[C \hookarr \PP^N]$ of $\Hilb$.  By considering the dual of the 
cotangent complex $L_{\cM}$, we arrive at an exact sequence
$$0 \to H^0(L_{\cM}^{\dual}) \to \mathfrak{g} \tensor \oh_{\Hilb} \to
T_{\Hilb} \to H^1(L_{\cM}^{\dual}) \to 0$$
of sheaves on $\Hilb$.  By restricting this sequence to $x$, we obtain an exact sequence
$$ 0 \to \mathfrak{g}_x \to \mathfrak{g} \to H^0(C, N_{C/\PP^N}) \to
T^1(C) \to 0$$
of $G_x$-representations.  The morphism $\mathfrak{g} \to H^0(C, N_{C/\PP^N})$ is obtained by
differentiating the map $G \to \Hilb, g \mapsto g \cdot x$.  Since the adjoint action on $\mathfrak{g}_x$ is trivial, we obtain the result.
\end{proof}

We will use the above lemma to compute the character of $K$ in one particular example.  Other examples can be computed similarly.  

\begin{example}[$A_{2k}$-singularity]
Let $C=C_{0} \cup E$, where $E$ is a smooth rational curve with a higher cusp $y^2=x^{2k+1}$ at $p=0$, and a nodal attachment to $C_{0}$ at $q=\infty$.   The first order deformation space can be written as
$$T^1(C) = T^1(C_0, q) \times \Cr(\hat{\oh}_{C,p}) \times T^1(\hat{\oh}_{C,p}) \times
T^1(\hat{\oh}_{C,q}),$$
where $\Cr(\hat{\oh}_{C,p})$ denotes the ``crimping'' deformations (see \cite{asw_crimping} for more details).  We can choose coordinates
$$\begin{aligned}
T^1(\hat{\oh}_{C,p}) &= \{ y^2 - x^{2k+1} + a_{2k-1}x^{2k-1} + \cdots + a_1 x +
a_0 = 0 \}, \\
T^1(\hat{\oh}_{C,q}) &= \{zw+ n=0\},
\end{aligned}$$
and a one-parameter subgroup $\eta\co  \GG_m \to \Aut(C)$ acting
via $\lambda\cdot (x,y)= (\lambda ^{-2}x, \lambda ^{-(2k+1)}y)$. 
Then $a_i \mapsto \lambda ^{2i-4k-2} a_i$ and  $n \mapsto \lambda n$. 
Therefore, the character of $T^1(\hat{\oh}_{C,p})$ is 
$$-(4+6+\cdots + (4k+2)) = -(4k^2+6k).$$  The character of $T^1(\hat{\oh}_{C,q})$ is $1$.  The character of $ T^1(C_0, q)$ is trivial.  For $k \ge 2$, by \cite[Proposition 3.4]{asw_crimping}, the weights of the action on $\Cr(\hat{\oh}_{C,p})$ are $1,3, \ldots, 2k-3$.  Therefore, the character of $\Cr(\hat{\oh}_{C,p})$ is $(k-1)^2$.  It follows that $\chi_K = -3k^2-8k+2$.  As a reality check, one sees that indeed $\chi_K = 13 \chi_{\lambda} - 2 \chi_{\delta}$ (consult Table 
\ref{T:table-characters} for values of $\chi_{\lambda}$ and $\chi_{\delta}$).
\end{example}

\section{Character theory vs intersection theory}

Let $C$ be a complete curve of arithmetic genus $g$
with a $\GG_{m}$-action $\eta\co \GG_{m}\ra \Aut(C)$. 
A \emph{versal deformation space with $\GG_m$-action} is a pointed affine scheme $0\in \Def(C)$ with a 
$\GG_m$-action together with a smooth morphism $[\Def(C) / \GG_m] \to \cV_g$ to the 
{\em stack of all complete curves of arithmetic genus $g$} sending $0$ to $[C]$.  
If $\cL$ is a line bundle defined on $\cV_g$ in a neighborhood of $C$ and $\Def(C)$ is a versal deformation space with $\GG_m$-action, then after shrinking $\Def(C)$, we may assume that $\cL$ is defined on $[\Def(C)/\GG_m]$.

\begin{remark} If $\omega_C$ is ample, the versal deformation space of $C$ is normal, and $\Aut(C)$ is linearly 
reductive, then it follows from \cite[Theorem 3]{alper_quotient} that there exists a versal deformation space with 
$\GG_m$-action (see also \cite[Proposition 2.3]{pinkham} for the formal case).
\end{remark}

We specialize to the case when $C$ 
has an isolated singularity $p\in C$ 
and $\hat{\cO}_{C,p}$ is positively 
graded by the $\GG_m$-action.  
Then by Pinkham's theory of deformations of varieties with $\GG_m$-action \cite[Proposition 2.2]{pinkham},
the space of infinitesimal deformations of $C$ has a decomposition
into the weight spaces: 
$$
T^1_C=\bigoplus_{\nu=-\infty}^{\infty} T^1_C(\nu).
$$
Following Pinkham \cite[Section (3.1)]{pinkham}, we define $\Def^{\, -}(C)$ to be the closed subscheme of
$\Def(C)$ corresponding to {\em negative deformations}: The tangent space to $\Def^{-1}(C)$ is 
$\bigoplus_{\nu<0} T^1_C(\nu)$ and the coordinate ring of $\Def^{-}(C)$ is positively graded.
The relationship between intersection theory on $[\Def^{\, -}(C) /\GG_m]$ 
and characters is given by the following observation.
\begin{thm}   \label{theorem-character-intersection}
Let $C$ be a complete curve of arithmetic genus $g$ 
and $0 \in \Def(C)$ be its versal deformation space with $\GG_m$-action.  Let $B$ be any complete curve with
a non-constant map $B \to [\Def^{-} (C)/\GG_m]$ and let $\cL$ be a line bundle on $\cV_g$.   Then
$$\chi_{\cL}(C, \eta) = - \frac{ \cL\cdot B } { \deg(B)},$$
where $\deg(B)$ is the degree of $B$ with respect to the natural $\oh(1)$ on $[\Def^{\, -}(C) / \GG_m]$.  In particular, if $C$ is Gorenstein (resp. the discriminant locus in $\Def(C)$ is Cartier), then 
 $$\chi_{\lambda_i}(C, \eta) = - \frac{ \lambda_i \cdot B} { \deg(B)} \qquad 
 \left(\text{resp. } \chi_{\delta_i}(C, \eta) =  - \frac{\delta_i \cdot B} { \deg(B)}  \right).$$
\end{thm}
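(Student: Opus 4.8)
The plan is to identify $[C]$ with the unique $\GG_m$-fixed point $0\in\Def^{-}(C)$, to note that both sides of the asserted identity factor through $\Pic([\Def^{-}(C)/\GG_m])$, and to evaluate them on the generator $\cO(1)$. Write $X=\Def^{-}(C)$ and $R=\cO(X)$; by construction $R$ is non-negatively graded with $R_{0}=k$, so the $\GG_m$-action on $X$ has $0$ as its only fixed point, with residue gerbe $B\GG_m\hookrightarrow[X/\GG_m]$. Composing this gerbe with the smooth chart $\pi\co[\Def(C)/\GG_m]\to\cV_{g}$ gives the morphism $B\GG_m\to\cV_{g}$ classifying $(C,\eta)$. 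Consequently, for any line bundle $\cL$ on $\cV_{g}$, the equivariant line bundle $\pi^{*}\cL$ on $X$ has fiber at $0$ equal to the character of weight $\chi_{\cL}(C,\eta)$, and for a complete curve $\psi\co B\to[X/\GG_m]$ one has $\cL\cdot B=\deg_{B}\psi^{*}\pi^{*}\cL$. Since $\cL\mapsto\chi_{\cL}(C,\eta)$ and $\cL\mapsto\cL\cdot B$ are additive and depend only on $\pi^{*}\cL$, it suffices to prove $\mathrm{wt}(\cM|_{0})=-\deg_{B}\psi^{*}\cM/\deg(B)$ for every $\GG_m$-equivariant line bundle $\cM$ on $X$.

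Next I would compute $\Pic([X/\GG_m])=\Pic^{\GG_m}(X)$. Because $R$ is non-negatively graded, the action extends to a morphism $\mu\co\AA^{1}\times X\to X$, $(t,x)\mapsto t\cdot x$, which is $\GG_m$-equivariant for the trivial action on the $\AA^{1}$-factor and satisfies $\mu|_{\{1\}\times X}=\id_{X}$ and $\mu|_{\{0\}\times X}\equiv 0$. Since $X=\Def^{-}(C)$ is normal (for $\omega_{C}$ ample; cf.\ the Remark above), $\Pic^{\GG_m}(\AA^{1}\times X)\cong\Pic^{\GG_m}(X)$, and feeding $\mu^{*}$ into this isomorphism forces the two restrictions along $\{0\}\times X$ and $\{1\}\times X$ to agree, i.e.\ $\id^{*}=c_{0}^{*}$ on $\Pic^{\GG_m}(X)$. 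Hence every $\GG_m$-equivariant line bundle on $X$ is $\cO_{X}$ with a character twist, so $\Pic([X/\GG_m])\cong\ZZ$, generated by the natural $\cO(1)$.

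It remains to pin down the two sides on $\cO(1)$. By the definition of $\deg(B)$ we have $\cO(1)\cdot B=\deg(B)$. For the character, pick a homogeneous $f\in R_{d}$ with $d\ge 1$ (which exists whenever there is anything to prove): then $D=\{f=0\}$ is a $\GG_m$-invariant Cartier divisor, $\cO_{X}(D)=f^{-1}\cO_{X}$ has fiber of weight $-d$ at $0$ (the computation underlying Proposition~\ref{proposition-character-delta}), while $f$ is a $\GG_m$-invariant section of $\cO(1)^{\otimes d}$ with zero divisor $D$, so $\cO_{X}(D)\cong\cO(1)^{\otimes d}$ equivariantly; comparing weights at $0$ gives $\chi_{\cO(1)}(C,\eta)=-1$. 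Now for an arbitrary $\cM$ of weight $w$ at $0$ the previous step gives $\cM\cong\cO(1)^{\otimes(-w)}$, whence $\deg_{B}\psi^{*}\cM=-w\,\deg(B)$ and $\chi_{\cM}(C,\eta)=w=-\deg_{B}\psi^{*}\cM/\deg(B)$, as claimed. Finally, when $C$ is Gorenstein the bundles $\lambda_{i}=\det\pi_{*}\omega^{i}$ are defined near $[C]$, and when the discriminant locus in $\Def(C)$ is Cartier the $\delta_{i}$ are defined near $[C]$; applying the identity with $\cL=\lambda_{i}$ (resp.\ $\cL=\delta_{i}$) gives the two displayed special cases.

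The main obstacle is the equivariant $\AA^{1}$-homotopy invariance $\Pic^{\GG_m}(\AA^{1}\times X)\cong\Pic^{\GG_m}(X)$ used in the second step, where normality of the cone $\Def^{-}(C)$ (a consequence of ampleness of $\omega_{C}$) is essential; everything else is bookkeeping, once one also notes that non-constancy of $\psi$ forces $\deg(B)>0$ by positivity of the grading on $R$.
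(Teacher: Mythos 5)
Your proposal is correct in substance and converges on the same structural fact that drives the paper's proof, namely that every line bundle on $[\Def^{-}(C)/\GG_m]$ is isomorphic to $\cO(d)$ for some $d$, after which the identity $\chi_{\cL}=-d$, $\cL\cdot B=d\deg(B)$ is immediate. But you reach that fact by a genuinely different route. The paper writes $\Def^{-}(C)=\Spec A$ with $A$ positively graded and observes that a line bundle on the quotient stack is a rank-one graded projective $A$-module, hence free by graded Nakayama (the cited \cite[Theorem 19.2]{eisenbud}), so equal to $\widetilde{A(d)}$. This is a purely module-theoretic argument with no geometric hypotheses on $\Def^{-}(C)$. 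You instead use the contracting morphism $\mu\co\AA^{1}\times X\to X$ and equivariant $\AA^1$-homotopy invariance of the Picard group to force $\id^{*}=c_{0}^{*}$, concluding that every equivariant line bundle is a character twist of $\cO_X$. Your subsequent normalization of $\cO(1)$ via an invariant divisor $\{f=0\}$, and your observation that non-constancy of $B$ forces $\deg(B)>0$ (a point the paper leaves implicit), are both fine.

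The one place where your route is strictly weaker is the homotopy-invariance step itself: $\Pic^{\GG_m}(\AA^{1}\times X)\cong\Pic^{\GG_m}(X)$ requires $X=\Def^{-}(C)$ to be normal (or at least seminormal), and this is not among the hypotheses of the theorem. Moreover, your parenthetical that normality is ``a consequence of ampleness of $\omega_C$'' misreads the Remark preceding the theorem: there, normality of the versal deformation space is listed as a separate \emph{hypothesis} (needed to invoke \cite{alper_quotient} for the existence of the $\GG_m$-action), not a consequence; and even granting normality of $\Def(C)$, normality of the closed subscheme $\Def^{-}(C)$ is a further unproved claim. In the cases actually used in the paper (ADE, toric, and other planar singularities) $\Def(C)$ is smooth and $\Def^{-}(C)$ is a coordinate subspace, so your argument goes through there; but to prove the theorem as stated you should replace the homotopy-invariance step by the graded-projective-modules-are-free argument, which costs nothing and removes the normality dependence.
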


\begin{proof}
We can write $\Def^{\, -}(C) = \Spec A$ with $A$ a positively graded $k$-algebra. The line bundle $\cL$ 
corresponds to a graded projective
 $A$-module which is free of rank $1$ by \cite[Theorem 19.2]{eisenbud}. It follows 
 that $\cL = \tilde{A(d)} = \oh(d)$ for some $d$.
Therefore $\chi_{\cL}(C, \eta) = -d$ 
and $\cL \cdot B = \deg_B(\cL) =d\deg(B)$.
\end{proof}
Theorem \ref{theorem-character-intersection} allows us to compute characters via intersection theory on
one-parameter families of {\em stable} curves so long as the locus of stable curves 
inside $[\Def^{-}(C)  / \GG_m]$ contains complete one-parameter families. This is not an uncommon 
occurrence since, as Pinkham shows (in the case of unibranch singularities), 
$[\Def^{\, -}(C) / \GG_m]$ contains an open set parameterizing {\em smooth} curves of genus $g$
\cite[Theorem 1.15]{pinkham}.

For special classes of planar singularities even stronger statement holds. For example, if $\hat{\cO}_{C,p}$
is an ADE singularity, then $\Def^{\, -}(C)=\Def(C)$ and the locus of worse-than-nodal curves in 
$[\Def^{\, -}(C) /\GG_m]$ is of codimension two. It follows that the 
characters of ADE singularities can all be computed by writing down a complete one-parameter family
of stable curves
in $[\Def^{\, -}(C)/\GG_m]$ and computing degrees of line bundles $\lambda$ and $\delta$
using standard intersection theory. We do so in a number of cases in Section \ref{S:intersection-theory}.

In the other direction, Theorem \ref{theorem-character-intersection} suggests a possibility of computing 
slopes of special loci inside $\Mg{g}$: 
\begin{example}[Toric singularities] Consider the planar toric singularity $C: x^p-y^q=0$. Its miniversal
deformation is 
\begin{equation*}
x^p=y^q+\sum a_{ij} x^iy^j, \quad 0\leq i\leq p-2, \ 0\leq  j\leq q-2.
\end{equation*}
We have that $\Def^{-}(C)=\spec k[a_{ij}: qi+pj<pq]$. The resulting weighted projective stack
$[(\Def^{-}(C)\setminus 0)/ \GG_m]$
is a moduli space of curves on $\PP(q,p,1)$ defined by the weighted homogeneous equation
\begin{equation}\label{E:toric-homogeneous}
x^p=y^q+\sum a_{ij} x^iy^jz^{pq-qi-pj}, \quad 0\leq i\leq p-2, \ 0 \leq j\leq q-2, \ qi+pj<pq.
\end{equation}
Theorem  \ref{theorem-character-intersection} implies that for any complete family of stable curves
$B\ra [(\Def^{-}(C)\setminus 0)/ \GG_m]$, the slope of $B$ is 
$(\delta\cdot B)/(\lambda\cdot B)=\chi_{\delta}(C)/\chi_{\lambda}(C)$.
By considering the monomial unibranch singularities $y^3=x^{3k+1}$ and $y^3=x^{3k+2}$, we recover
the following result of Stankova \cite{stankova} (see also Remark \ref{R:anand}).
\begin{cor}\label{C:stankova} For $g\equiv 0,1 \mod 3$, there is a complete family $B$ of generically smooth 
genus $g$ stable trigonal curves such that $(\delta\cdot B)/(\lambda\cdot B)=36(g+1)/(5g+1)$.
\end{cor}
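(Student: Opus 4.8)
The plan is to realize the desired family $B$ inside the weighted-projective parameter stack of negative deformations of a well-chosen planar toric curve, and then to read off its slope from the character computation of Example \ref{E:monomial-unibranch} via Theorem \ref{theorem-character-intersection}.

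First I would set $q=g+1$. Since $g\equiv 0$ or $1\pmod 3$ we have $\gcd(3,q)=1$, so the weighted-homogeneous curve $C_0\subset\PP(3,q,1)$ defined by $y^3=x^q$ is an integral Gorenstein curve with a single unibranch singularity and arithmetic genus $(3-1)(q-1)/2=q-1=g$. Applying Example \ref{E:monomial-unibranch} with $p=3$, together with the Frobenius-sum identity \eqref{E:sum-of-gaps} and the formulas \eqref{unibranch-1}--\eqref{unibranch-2}, one computes $\chi_\lambda(C_0)=\chi_1(C_0)=\frac{(q-1)(5q-4)}{6}=\frac{g(5g+1)}{6}$ and $\chi_2(C_0)=(2g-1)^2+\chi_1(C_0)-1$; using the relation $\delta=13\lambda-\lambda_2$ from \eqref{relations}, this gives $\chi_\delta(C_0)=13\chi_1(C_0)-\chi_2(C_0)=12\chi_1(C_0)-(2g-1)^2+1=6g(g+1)$, and therefore $\chi_\delta(C_0)/\chi_\lambda(C_0)=36(g+1)/(5g+1)$.

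Next I would construct the family. As in the example on toric singularities above (taken with $p=3$), the negative-deformation slice $\Def^-(C_0)$ is the coordinate subspace of the miniversal deformation $y^3=x^q+\sum a_{ij}x^iy^j$ spanned by the positive-weight parameters, and the proper Deligne--Mumford stack $\cP:=[(\Def^-(C_0)\setminus 0)/\GG_m]$ parameterizes the curves on $\PP(3,q,1)$ cut out by the weighted-homogeneous equations as in \eqref{E:toric-homogeneous}; by Pinkham \cite[Theorem 1.15]{pinkham} the generic member of $\cP$ is a smooth curve of genus $g$, and every member carries the degree-$3$ projection $[x:y:z]\mapsto[x:z]$ (the equation being monic of degree $3$ in $y$), so the generic member is smooth trigonal. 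I would then take $B\subset\cP$ to be a general complete curve, e.g.\ a general one-dimensional linear section with respect to a projective embedding of the coarse space of $\cP$ (and $B=\cP$ if $\cP$ is itself a curve). Granting that the worse-than-nodal locus in $\cP$ has codimension at least $2$, such a general $B$ misses it, so every fibre over $B$ is at worst nodal, i.e.\ $B$ is a complete, generically smooth family of stable trigonal curves of genus $g$. Finally, $\lambda$ and $\delta$ are line bundles near $[C_0]$ on the stack $\cV_g$ of all complete genus-$g$ curves (the discriminant is Cartier since $C_0$ is planar), so Theorem \ref{theorem-character-intersection} applied to the non-constant map $B\to[\Def^-(C_0)/\GG_m]$ gives $\lambda\cdot B=-\chi_\lambda(C_0)\deg(B)$ and $\delta\cdot B=-\chi_\delta(C_0)\deg(B)$, whence $(\delta\cdot B)/(\lambda\cdot B)=\chi_\delta(C_0)/\chi_\lambda(C_0)=36(g+1)/(5g+1)$.

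The main obstacle is the codimension claim for the worse-than-nodal locus of $\cP$, which is what lets a general complete curve $B$ avoid it and hence consist entirely of stable curves. I would deduce it from the classical fact that, for a reduced plane curve singularity, the generic point of the discriminant hypersurface in the miniversal deformation corresponds to a curve with exactly one node; the locus of worse-than-nodal fibres is then contained in the singular locus of the discriminant and so has codimension at least $2$ in $\Def(C_0)$. The remaining point is to verify that this survives restriction to the smoothing slice $\Def^-(C_0)$ and passage to the $\GG_m$-quotient, which holds because the discriminant and its singular stratum are $\GG_m$-invariant and $\Def^-(C_0)$ is the component of $\Def(C_0)$ carrying the generic smoothing. (When $g\le 4$ the singularity $y^3=x^q$ is of type $A_2$, $E_6$, or $E_8$, one has $\Def^-(C_0)=\Def(C_0)$, and the required statement is exactly the one recorded in the discussion preceding that example.)
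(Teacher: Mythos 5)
Your proposal is correct and takes essentially the same route as the paper: both use the unibranch toric singularity $y^3=x^{g+1}$ (the paper splits this into the cases $x^{3k+1}$ and $x^{3k+2}$), read off $\chi_1$ and $\chi_2$ from Example \ref{E:monomial-unibranch} and the Frobenius sum \eqref{E:sum-of-gaps}, and apply Theorem \ref{theorem-character-intersection} to a complete curve in $[(\Def^-(C)\setminus 0)/\GG_m]$. Your added justification that a general complete curve avoids the worse-than-nodal locus (via the codimension of the singular stratum of the discriminant) fills in a point the paper dismisses with ``evidently.''
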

\begin{proof}
Let $C$ be the monomial unibranch singularity $y^3=x^{3k+1}$. 
From Equation \eqref{E:toric-homogeneous} 
the restriction of its miniversal deformation to $\Def^{\, -}(C)$ is 
\begin{align}\label{E:trigonal}
y^3=x^{3k+1}+y(a_{2k}x^{2k}+\cdots+a_0)+(b_{3k-1}x^{3k-1}+\cdots+b_0).
\end{align}
It follows that
$[(\Def^{\, -}(C) \setminus 0  ) / \GG_m]\simeq \mathcal{P}(2,5,\dots,6k+2,6,9,\dots,9k+3)$ is a
moduli space of trigonal curves of genus $g=3k$ defined by Equation \eqref{E:trigonal} on
$\PP(3,3k+1,1)$. 

Evidently, there is a complete family 
$B\ra [(\Def^{\, -}(C) \setminus 0  ) / \GG_m]$ of at-worst nodal irreducible curves. 
Applying Theorem \ref{theorem-character-intersection} to this family and using 
computations of Example \ref{E:monomial-unibranch}, 
we find that 
$\lambda\cdot B=\chi_{1}(C)=2g(5g+1)/12$ and $\delta\cdot B=13\chi_{1}(C)-\chi_2(C)=6g(g+1)$. 
This gives slope $36(g+1)/(5g+1)$.

Considering $y^3=x^{3k+2}$, we obtain in an analogous fashion a complete family of trigonal
curves of genus $g=3k+1$ with slope $36(g+1)/(5g+1)$.
\end{proof}
We note that in contrast to a simple construction above, an extremal family achieving 
slope $36(g+1)/(5g+1)$ is obtained by a laborious 
construction in \cite{stankova}. However, our methods do not establish a stronger result, 
also due to Stankova, 
that $36(g+1)/(5g+1)$ is {\em maximal possible} among slopes of trigonal families 
of genus $g$. 
\end{example}

\section{Intersection theory computations}
\label{S:intersection-theory}

In this section, we use intersection theory to find slopes of one-parameter families of curves arising from 
stable reduction of certain planar singularities. Namely, we treat 
the cases of $A_{2k+1}$, $A_{2k+1}^{\{1\}}$, $D_{2k+2}$, $D_{2k+2}^{\{1,2\}}$, 
and toric singularities $x^p=y^q$.
Our computations agree with the results obtained in Section \ref{S:character-computations}.
That this should be the case follows from Theorem \ref{theorem-character-intersection} after 
verification that every family we write down comes from $[\Def^{-}(C)/\GG_m]$ of an appropriate
 singular curve $C$.
The same techniques can be applied to other singularities. However, as singularities become more complicated,
the task of writing down a complete family of stable limits becomes substantially more challenging. 

\subsection{Hyperelliptic tails, bridges and triboroughs}
\label{hyperelliptic}
\begin{example}[Hyperelliptic bridges]
\label{E:hyperelliptic-bridges}
We construct a complete one-parameter family $B_k$ of $2$\nb-pointed stable hyperelliptic curves of genus $k$, 
with marked points conjugate under the hyperelliptic involution, that arises from 
stable reduction of $A_{2k+1}$ singularity. It follows from our construction and \cite[Theorem 6.5]{hassett-stable}
that $B_k$ comes from $[\Def^{-1}(C)/\GG_m]$ where $C$ is the projective closure of $y^2=x^{2k+2}$.
We show that $B_k$ intersects divisors on $\Mg{k,2}$ as follows:
\begin{equation*}
\begin{aligned}
\lambda\cdot B_k &=(k^2+k)/2, \\
\delta_{0}\cdot B_k &=(2k+1)(2k+2), \\
\psi_1\cdot B_k &=\psi_2\cdot B_k=1, \\
\delta_1\cdot B_k &=\cdots =\delta_{\lfloor k/2\rfloor} \cdot B_k=0.
\end{aligned} 
\end{equation*}
If $B_k \ra \Mg{g}$ is the family of hyperelliptic genus $k$ bridges 
obtained by attaching a constant genus $(g-k-1)$ curve to the marked sections, then
$(K_{\Mg{g}}+\alpha\delta)\cdot B_k\leq 0$ exactly for $\alpha\leq (3k+11)/(8k+12)$. This of course 
agrees with the character theory 
computation of the $\alpha$-value of $A_{2k+1}$-singularity (see Table \ref{T:table-characters}) due to Theorem 
\ref{theorem-character-intersection}.

To construct $B_k$, 
take a Hirzebruch surface 
 $\mathbb{F}_1 \ra B$ over $B\simeq \PP^1$. Denote by 
$E$ the unique $(-1)$\nb-section and by $F$ the fiber. Next, 
choose $2k+2$ general divisors $S_1,\dots, S_{2k+2}$ in the linear system 
$\vert E+F\vert$ (these are sections of $\mathbb{F}_1\ra B$ of self-intersection $1$).
The divisor $\sum_{i=1}^{2k+2} S_i$ is divisible by $2$ in $\Pic(\mathbb{F}_1)$ 
and so there is a cyclic degree $2$
cover $\pi\co X\ra \mathbb{F}_1$ branched over $\sum_{i=1}^{2k+2} S_i$.
We have that $\pi^{-1}(E)=\Sigma_1+\Sigma_2$ is a disjoint union of two sections. 
Thus, $\pi\co X \ra B$ is a family of at-worst nodal hyperelliptic curves of genus $k$ with two conjugate sections
$\Sigma_1$ and $\Sigma_2$.

From the construction, there are $\binom{2k+2}{2}$ nodes in the fibers of $\pi$. Because $X$ has 
$A_1$ singularity at each of these nodes, we have
$$\delta_{X/B}=2 \binom{2k+2}{2}=(2k+1)(2k+2).$$ 
From $K_{X/B}=\pi^*(K_{\mathbb{F}_1/B}+\frac{1}{2}\sum_{i=1}^{2k+2} S_i)$ we deduce that 
$$12\lambda_{X/B}-\delta_{X/B}=K_{X/B}^2=2(K_{\mathbb{F}_1}+2F+(2k+2)(E+F))^2=2(k^2-1).$$
Finally, the self-intersection of each $\Sigma_i$
is $(-1)$. It follows that $\pi\co X\ra B$ is the requisite family.
\end{example}

\begin{example}[Hyperelliptic tails attached at arbitrary points] 
We now consider a family of tails appearing from stable reduction of a dangling 
$A_{2k+1}^{\{1\}}$ singularity (see Section \ref{S:dangling}).
Taking family $B_k$ constructed in Example \ref{E:hyperelliptic-bridges} above and forgetting one 
marked section, we arrive at a family $H_k\subset \Mg{k,1}$ of hyperelliptic curves with a single marked section. Furthermore, 
\begin{equation*}
\begin{aligned}
\lambda\cdot H_k &=(k^2+k)/2, \\
\delta_{0}\cdot H_k &=(2k+1)(2k+2), \\
\psi\cdot H_k &=1, \\
\delta_1\cdot H_k &=\cdots =\delta_{\lfloor g/2\rfloor} \cdot H_k=0.
\end{aligned} 
\end{equation*}
In particular, the locus of curves with a hyperelliptic genus $k$ tail
falls in the 
base locus of $K_{\Mg{g}}+\alpha\delta$ for $\alpha<(3k^2+11k+4)/(8k^2+12k+2)$. For example,
when $k=2$, this shows that $\Delta_2\subset \Mg{g}$ is covered by curves on which 
$K_{\Mg{g}}+(19/29)\delta$ has degree $0$.
\end{example}

\begin{example}[Hyperelliptic triboroughs]
\label{E:hyperelliptic-triboroughs}
Next, we construct a complete one-parameter family $Tri_k$ of $3$\nb-pointed stable 
hyperelliptic curves of genus $k$, with two marked points conjugate, that arises
from stable reduction of $D_{2k+2}$ singularity. It is easy to verify that this family comes
from $[\Def^{-1}(C)/\GG_m]$ where $C$ is the projective closure of $x(y^2-x^{2k})=0$. 
We show that $Tri_k$ intersects divisor classes on $\Mg{k,3}$ as follows:
\begin{equation*}
\begin{aligned}
\lambda\cdot Tri_k &=k^2+k, \\
\delta_{0}\cdot Tri_k &=2(2k+1)(2k+2), \\
\psi_1\cdot Tri_k &=\psi_2\cdot Tri_k=2, \\
\psi_3\cdot Tri_k &=2k, \\
\delta_1\cdot Tri_k &=\cdots =\delta_{\lfloor k/2\rfloor} \cdot Tri_k=0.
\end{aligned} 
\end{equation*}

\noindent
The construction of $Tri_k$ parallels that of the 
family $B_k$ above. 
Namely, keeping the notation of Example \ref{E:hyperelliptic-bridges}, consider an additional section $S_0$ of $\mathbb{F}_1$ 
of self-intersection $1$ such that $S_0$ is transverse to $\sum_{i=1}^{2k+2} S_i$.
Set $C:=\pi^{-1}(S_0)$. Then $C$ is a 
degree $2$ cover of $B$ (of genus $k$). Note that $C^2=2$ on $X$.
Consider the base extension $\pi'\co Y:= X\times_{B} C \ra C$. The preimage of $C$ on $Y$ is the union of two sections $C_1$ and $C_2$, intersecting transversally in 
$2k+2$ points. By construction,
$(C_1+C_2)^2=2C^2=4$, and so $C_1^2=C_2^2=-2k$. Setting $\Sigma_3:=C_1$, we obtain
the requisite family $\pi'\co Y\ra C$ of hyperelliptic genus $k$ curves with two conjugate sections 
(the preimages of $\Sigma_1$ and $\Sigma_2$)
of self-intersection $(-2)$ and the third section $\Sigma_3$ 
of self-intersection $(-2k)$. 
\end{example}

\begin{example}[Hyperelliptic bridges attached at arbitrary points]
We now consider a family of tails appearing in stable reduction of a dangling 
$D_{2k+2}^{\{1,2\}}$ singularity (see Section \ref{S:dangling}).
Taking family $Tri_k$ constructed in Example \ref{E:hyperelliptic-triboroughs} above and forgetting one 
conjugate section, we arrive at a family $BH_k$ of hyperelliptic curves with two marked points. 
The intersection numbers of $BH_k$ are
\begin{equation*}
\begin{aligned}
\lambda\cdot BH_k &=k^2+k, \\
\delta_{0}\cdot BH_k &=2(2k+1)(2k+2), \\
\psi_1\cdot BH_k &=2, \quad \psi_2\cdot BH_k =2k, \\
\delta_1\cdot BH_k &=\cdots =\delta_{\lfloor k/2\rfloor} \cdot BH_k=0.
\end{aligned} 
\end{equation*}

In particular, the locus of curves with a hyperelliptic bridge of genus $k$ 
attached at arbitrary points falls in the 
base locus of $K_{\Mg{g}}+\alpha\delta$ for $\alpha<(3k^2+7k+4)/(8k^2+10k+2)$. For example,
when $k=2$, this shows that the locus of curves with genus $2$ bridges in $\Mg{g}$ 
is covered by curves on which $K_{\Mg{g}}+(5/9)\delta$ has degree $0$.
\end{example}

\subsection{Toric singularities}
\label{S:toric}

How can we write down a complete one-parameter family of stable
limits of a singularity in such a way that its
intersection numbers
with divisors on $\Mg{g}$ can be computed? We give a complete
answer only in the case of a planar toric singularity $x^{p}=y^{q}$,
even though our method applies more generally to any planar singularity.

Our approach is via degenerations: Begin with a complete family $F_1$
of at-worst nodal curves -- a general pencil of plane curves of degree
$d\gg 0$ will do.
Now vary $F_1$ in a one-parameter family $F_s$ in such a way that
among curves in $F_0$
exactly one curve $C$ has singularity $f(x,y)=0$ while the rest
are at-worst nodal.
Since the generic points of $F_0$ and $F_1$ are smooth curves of genus
$g=\binom{d-1}{2}$, we obtain two $1$-cycles $F_0, F_1\in \N_1(\Mg{g})$.
For a line bundle
$\L\in \Pic(\Mg{g})$ the numbers  $\L\cdot F_0$ and $\L \cdot F_1$
will differ. If we denote
by $\F$ the total space of $\{F_s\}$, then
the discrepancy between $\L\cdot F_0$ and $\L\cdot F_1$ is accounted
for by indeterminacy
of the rational map $\F \dashrightarrow \Mg{g}$ at the point $[C]$. In fact, if
\[
\xymatrix{
&W \ar[rd]^{h} \ar[ld]_{f}&\\
\F \ar@{-->}[rr]&&\Mg{g}
}
\]
is the graph of this rational map, then in $\N_1(\Mg{g})$ we have
$F_1=F_0+h(f^{-1}([C]))$.
By construction, $Z:=h(f^{-1}([C]))$ is a $1$\nb-cycle inside the variety of stable limits of $f(x,y)=0$. 
The slope of $Z$ is then given by
$
(\delta\cdot F_1 -\delta\cdot F_0)/(\lambda\cdot F_1 -\lambda\cdot F_0).
$

We now perform the necessary computations for toric planar singularities. 
To begin, let $C$ be a plane curve of degree
$d\gg 0$, with an isolated singularity $x^{pb}=y^{qb}$, where $p$ and $q$ are coprime.
The possible stable limits of $C$ have the following description due to Hassett:
\begin{prop}[{\cite[Theorem 6.5]{hassett-stable}}]
\label{P:tails-description}
The stable limits of $C$ are
of the form $\tilde{C} \cup T$,
where the tail $(T,p_1,\dots, p_b)$ is a $b$-pointed curve of arithmetic genus
$g=(pqb^2-pb-qb-b+2)/2$. Moreover,
\begin{enumerate}
\item $K_T=(pqb-p-q-1)(p_1+\dots+p_b)$.
\item $T$ is $qb$-gonal with $g^1_{qb}$ given by $|q(p_1+\dots+p_b)|$.
\end{enumerate}
\end{prop}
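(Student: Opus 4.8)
The plan is to carry out stable reduction of a one-parameter smoothing of $C$ and, exactly as in the toric-singularity example of the previous section, to use Pinkham's description of negative-weight deformations \cite{pinkham} to exhibit the tail $T$ as a weighted-homogeneous plane curve. Fix a one-parameter smoothing of $C$. Since $C$ is smooth away from its singular point $0\in C$, its stable limit is obtained from the partial normalization $\tilde{C}$ of $C$ at $0$ by attaching a tail $T$ along the $b$ preimages $p_1,\dots,p_b$ of $0$, where $b$ is the number of branches of $x^{pb}=y^{qb}$, namely the $b$ irreducible factors $x^p-\zeta_b^{\,j}y^q$. Up to the crimping deformations, which do not affect the stable limit, the smoothing of the germ at $0$ is $\GG_m$-equivariant for the action $\operatorname{wt}(x)=q$, $\operatorname{wt}(y)=p$ and of negative weight; so, just as in \eqref{E:toric-homogeneous}, after a suitable base change the tail is the curve
\[
T \ \subset\ \PP(q,p,1)
\]
of weighted degree $pqb$ cut out by a homogenization $x^{pb}=y^{qb}+\sum a_{ij}\,x^i y^j z^{\,pqb-qi-pj}$ of the smoothing equation, glued to $\tilde{C}$ along $T\cap\{z=0\}$. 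By Pinkham's theorem every smoothing of the germ arises from such a negative deformation, so every stable limit of $C$ has this form; since $d\gg 0$, the dualizing sheaf $\omega_{\tilde{C}\cup T}$ is ample and the limit is genuinely stable. Setting $z=0$ kills the deformation terms, so $T\cap\{z=0\}$ equals $\{x^{pb}=y^{qb}\}\subset\PP(q,p)$, which is exactly the $b$ distinct reduced points $p_1,\dots,p_b$ coming from the $b$ factors; in particular $\mathcal{O}(1)|_T=\mathcal{O}_T(p_1+\dots+p_b)$. One also checks that $T$ avoids the two singular points $[1:0:0]$ and $[0:1:0]$ of $\PP(q,p,1)$ (the defining equation takes the nonzero values $1$ and $-1$ there), so $T$ lies in the smooth locus and ordinary adjunction applies.

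With the tail identified, parts (1) and (2) reduce to computations on $\PP(q,p,1)$. Using $K_{\PP(q,p,1)}=\mathcal{O}(-(p+q+1))$, $[T]=pqb\cdot\mathcal{O}(1)$, and $\mathcal{O}(1)^2=1/(pq)$, adjunction gives $\omega_T=\mathcal{O}(pqb-p-q-1)|_T$, hence $\deg\omega_T=(pqb-p-q-1)b$ and therefore $p_a(T)=(pqb^2-pb-qb-b+2)/2$; combining with $\mathcal{O}(1)|_T=\mathcal{O}_T(p_1+\dots+p_b)$ gives $K_T=(pqb-p-q-1)(p_1+\dots+p_b)$, which is (1). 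For (2), assuming $q\le p$ (which we may, the singularity being written so that $qb$ is the smaller exponent), the restriction map $H^0(\PP(q,p,1),\mathcal{O}(q))\to H^0(T,\mathcal{O}(q)|_T)$ identifies $|q(p_1+\dots+p_b)|$ with the pencil spanned by the only two degree-$q$ monomials $x$ and $z^q$; this pencil is base-point-free because its sole possible base point $\{x=z=0\}=[0:1:0]$ is not on $T$, and $\deg(\mathcal{O}(q)|_T)=qb$, so it is a $g^1_{qb}$ and $T$ is $qb$-gonal. (Surjectivity of the restriction map follows from $H^1(\PP(q,p,1),\mathcal{O}(q-pqb))=0$.)

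The step I expect to be the main obstacle is the stable reduction itself: one must verify that, after an appropriate ramified base change, a single weighted blow-up of the threefold total space adapted to the $\GG_m$-action of weights $(q,p)$ yields a family with smooth total surface whose central fiber is the reduced, transverse nodal union $\tilde{C}\cup T$ — so that this really is the Deligne--Mumford stable model, with no further blow-ups or contractions required — and that the crimping directions together with the nonnegative-weight part of $T^1_C$ genuinely do not enter the limit. This is precisely the content of the cited theorem of Hassett, and a self-contained treatment would require a careful local analysis of weighted blow-ups, which I would not attempt to reproduce here.
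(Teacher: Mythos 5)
The paper itself offers no proof of this proposition: it is quoted directly from \cite[Theorem 6.5]{hassett-stable}, so there is no internal argument to compare yours against. Your outline is the natural one and is consistent with how the paper actually uses the result: the proof of Proposition \ref{P:toric-family} performs exactly the degeneration you describe (a base change of order $bpq$ followed by a weighted blow-up with weights $w(x,y,t)=(q,p,1)$, producing a tail $T$ cut out by a weighted-homogeneous equation of degree $pqb$ in $\PP(q,p,1)$), and the negative-weight deformation picture you invoke is the same one the paper uses via Pinkham in Section 4 and in Equation \eqref{E:toric-homogeneous}. Your numerical verifications all check out: $T$ misses the two singular points of $\PP(q,p,1)$, so adjunction with $K_{\PP(q,p,1)}=\cO(-(p+q+1))$, $[T]=\cO(pqb)$ and $\deg\cO(1)|_T=pqb/(pq)=b$ gives $\deg\omega_T=b(pqb-p-q-1)$ and the stated genus; $T\cap\{z=0\}$ is the reduced divisor $p_1+\cdots+p_b$, giving (1); and for $q<p$ the only degree-$q$ monomials $x,z^q$ give the base-point-free pencil of (2).

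The one genuine gap — which you identify yourself — is the structural claim that the stable limit really is $\tilde{C}\cup T$ with $T$ ranging exactly over these weighted-homogeneous curves, i.e.\ that the single weighted blow-up after base change yields the Deligne--Mumford stable model and that every stable limit arises from a negative-weight deformation of the germ. That is precisely the content of the cited theorem, so your argument is not independent of the citation; it is a verification of the numerical consequences granted the identification of $T$. Two smaller points worth tightening if you wanted a self-contained statement: (i) exhibiting a $g^1_{qb}$ shows only that the gonality is at most $qb$, whereas ``$qb$-gonal'' as stated asserts equality; and (ii) the claim that crimping directions and the nonnegative-weight part of $T^1_C$ do not affect the limit deserves at least a sentence of justification (the positive-weight deformations are equisingular and the crimping parameters change only the attaching data of $\tilde{C}$, not the isomorphism class of $(T,p_1,\dots,p_b)$). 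Neither affects the correctness of what you wrote.
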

Given a two-parameter deformation 
of the curve $C$, one obtains a one-dimensional family of stable limits. The next proposition constructs
one such family and computes its intersections with the divisor classes $\lambda, \delta$, and $\psi$ on
$\Mg{g,b}$.
\begin{prop}\label{P:toric-family} Let $(p,q)=1$.
Suppose $F_0$ is a pencil of plane curves of degree $d\gg 0$ containing 
a curve $C$ with a unique singularity $x^{pb}=y^{qb}$ and such that the total family over
$F_0$ is smooth. Consider 
a deformation $\F=\{F_s\}$ of $F_0$ such that $F_1$ is a general pencil. 
If $\F \stackrel{f}{\longleftarrow} W \stackrel{h}{\longrightarrow} \Mg{\binom{d-1}{2}}$ is the graph of the rational 
map $\F\dashrightarrow \Mg{\binom{d-1}{2}}$,
 then
the $1$-cycle
$Z:=h(f^{-1}([C]))$ inside $\Mg{g, b}$ (here, $g=(pqb^2-pb-qb-b+2)/2$) is irreducible and 
satisfies:
\begin{multline*}
\begin{aligned}
\lambda\cdot Z &=\frac{b}{12}\left( (pqb-p-q)^2+pq(pqb^2-pb-qb+1)-1\right),\\
\delta_{0}\cdot Z &=pqb(pqb^2-pb-qb+1), \\
\psi\cdot Z &=b.
\end{aligned}
\end{multline*}
\end{prop}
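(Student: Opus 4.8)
The plan is to carry out the degeneration recipe set up above. Concretely, I would fix a general plane curve $D$ of degree $d\gg 0$ avoiding the singular point of $C$, take the pencil $F_0=\langle C,D\rangle$, and move it to a general pencil $F_1$ through a one-parameter family $\{F_s\}$ sweeping out a ruled parameter surface $\mathcal{P}$ (coordinates: which pencil, and which member of it); by construction the only member of $\{F_s\}$ that is not at worst nodal is $C$, so the moduli map $\Phi\co\mathcal{P}\dashrightarrow\Mg{\binom{d-1}{2}}$ is undefined only at $[C]$. Resolving the indeterminacy, $\mathcal{P}\xleftarrow{f}W\xrightarrow{h}\Mg{\binom{d-1}{2}}$, and comparing the numerically equivalent fibers $F_0,F_1$ of $\mathcal{P}\to\PP^1$ yields $[\Phi(F_1)]=[\Phi(F_0)]+Z$ in $\N_1$, where $Z=h_*(f^{-1}([C]))$ lies in the locus of stable limits of $C$, i.e.\ in the image of the clutching map $\Mg{g,b}\to\Mg{\binom{d-1}{2}}$ gluing a $b$-pointed genus-$g$ tail $T$ to the fixed normalization $(\tilde C,p_1,\dots,p_b)$ of $C$ at its singular point, with $g=(pqb^2-pb-qb-b+2)/2$ exactly the tail genus of Proposition \ref{P:tails-description}. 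Thus $Z$ is genuinely a $1$-cycle in $\Mg{g,b}$, and for $\mathcal{L}\in\{\lambda,\delta_0\}$ one has $\mathcal{L}\cdot Z=\mathcal{L}\cdot\Phi(F_1)-\mathcal{L}\cdot\Phi(F_0)$ — for $\delta_0$ after accounting for the $b$ attaching nodes via $\iota^*\delta_0=\delta_0+\sum_i\Delta_{n_i}$ together with the self-intersection identity $\Delta_{n_i}\cdot Z=-\psi_i\cdot Z$ (the $\tilde C$-side marked point being constant along $Z$) — while $\psi\cdot Z=\sum_{i=1}^b\psi_i\cdot Z$ is extracted as minus the total self-intersection of the marked sections in the family $Z$.

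The $F_1$ side is classical. With $S_1=\Bl_{d^2}\PP^2$ the total space of the general pencil, a Leray computation from $\chi(\oh_{S_1})=1$ gives $\lambda\cdot\Phi(F_1)=\binom{d-1}{2}$; and $\omega_{S_1/\PP^1}=(2d-3)H-\sum E_i$ gives $\kappa\cdot\Phi(F_1)=\omega_{S_1/\PP^1}^2=3(d-1)(d-3)$, so $12\lambda=\kappa+\delta_0$ yields $\delta_0\cdot\Phi(F_1)=3(d-1)^2$ (equivalently $\sum_p\mu_p=\chi_{\mathrm{top}}(S_1)-2(2-2g)=3(d-1)^2$, all of it carried by simple nodes).

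The heart of the argument is the $F_0$ side: the intersection numbers of the stable reduction of the pencil $F_0$ along its bad fiber $C$. Near that fiber the total space of $F_0$ is the smooth surface $\{x^{pb}-y^{qb}=t\}$, and following Hassett's local analysis \cite{hassett-stable} one base-changes $t=\tau^N$ for the appropriate semistable-reduction order $N$ determined by the toric singularity, resolves the resulting surface singularity by its toric resolution, and contracts the superfluous rational curves to obtain the stable model $\mathcal{Y}\to B$ with $B\to\PP^1_t$ of degree $N$. Then $\kappa\cdot\Phi(F_0)=\tfrac1N\omega_{\mathcal{Y}/B}^2$ and $\delta_0\cdot\Phi(F_0)=\tfrac1N\#\{\text{nodes in fibres of }\mathcal{Y}/B\}$: away from $C$ these reproduce the $F_1$ counts, and at $C$ one picks up a purely local correction depending only on $p,q,b$, read off from the toric resolution whose shape is pinned down by Proposition \ref{P:tails-description} ($T$ of genus $g$, $qb$-gonal with $g^1_{qb}=|q(p_1+\dots+p_b)|$ and $K_T=(pqb-p-q-1)(p_1+\dots+p_b)$). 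The same resolution records the self-intersections of the $b$ marked sections, giving $\psi_i\cdot Z=1$ and hence $\psi\cdot Z=b$. Assembling the three inputs, the $d$-dependent parts cancel (so that $\lambda\cdot Z=\tfrac1{12}(\kappa_{\mathrm{loc}}+\delta_{0,\mathrm{loc}})$ with both terms local) and one is left with the stated values; Theorem \ref{theorem-character-intersection} then forces agreement with the character computation of Example \ref{E:monomial-unibranch}, and for $b=1$ this even re-derives the Frobenius sum-of-gaps formula \eqref{E:sum-of-gaps}. In small cases (e.g.\ $q=1,\ b=2$, which is the $A_{2k+1}$ situation) one may skip the pencil entirely and realize $Z$ directly as a family of $qb$-gonal tails on a Hirzebruch-type surface, exactly as in Example \ref{E:hyperelliptic-bridges}.

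The main obstacle will be exactly that local computation: identifying the correct base-change order $N$, performing the toric resolution of $\{x^{pb}-y^{qb}=\tau^N\}$, and keeping consistent bookkeeping when descending cycles from $B$ back to $\PP^1_t$ — in particular correctly counting how many exceptional rational curves survive into the stable model (this controls $\delta_0\cdot Z$) and which self-intersections the $b$ attaching sections acquire (this controls $\psi\cdot Z$). Everything else is either classical (the $F_1$ side) or standard bookkeeping with the clutching map.
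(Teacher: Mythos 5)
Your overall strategy is the paper's own: compare the pencils $F_0$ and $F_1$, handle the general pencil classically, and extract the cycle $Z$ from the stable reduction of the bad fiber via a base change followed by a (weighted) blow-up. But the proposal stops exactly where the proposition's content begins. Everything that produces the stated numbers is deferred to ``the local computation'' that you acknowledge not having done: the paper base-changes by order $pqb$ to get the local equation $x^{pb}-y^{qb}=t^{pqb}$ and then performs a \emph{single} weighted blow-up with weights $w(x,y,t)=(q,p,1)$, which already yields the stable model $\tilde C\cup T$ with $T$ smooth and $T^2=-b$; writing $K_{\cZ}=\pi^*K_{\cY}+aT$ and intersecting with $T$ gives $a=p+q-pqb$, hence $\kappa\cdot Z=b(pqb-p-q)^2$, and the $b$ nodes of the central fiber give $\psi\cdot Z=b$. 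None of this is in your write-up, so the stated formulas for $\lambda\cdot Z$, $\delta_0\cdot Z$, $\psi\cdot Z$ are not actually derived. Likewise, irreducibility of $Z$ is asserted by pointing at the clutching locus, whereas the paper proves it by exhibiting $Z$ explicitly as a weighted projective line $\PP(pq-p-q,pq)$ arising from the two-parameter deformation $x^p=y^q+sxy+t$.

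One intermediate claim is also wrong as stated: ``away from $C$ these reproduce the $F_1$ counts.'' The nodal fibers of $\X_0$ away from $C$ number $3(d-1)^2-(pqb^2-pb-qb+1)$, not $3(d-1)^2$; the deficit $2g+b-1$ is obtained in the paper by comparing topological Euler characteristics of $\X_0$ and $\X_1$ (equivalently, it is the Milnor number of the singularity), and it is precisely this deficit that, after the order-$pqb$ base change, produces the dominant term $pqb(pqb^2-pb-qb+1)$ of $\delta_0\cdot Z$. If the node counts away from $C$ really did agree with those of the general pencil, $\delta_0\cdot Z$ would come out wrong. So the plan is sound and the difficulty is correctly located, but what you have is a roadmap: the Euler-characteristic count, the identification of the base-change order and blow-up weights, the discrepancy computation for $\kappa$, and the self-intersections of the attaching sections all still need to be supplied.
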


\begin{proof} 
Without loss of generality, we can assume that the total space of the family of plane curves over $\F=\{F_s\}$ 
has local equation
$x^p=y^q+sxy+t$. Then the simultaneous stable reduction of this family is obtained by
the weighted blow-up of $\AA^2_{s,t}$ with weights $w(s,t)=(pq-p-q,pq)$. It follows that 
$Z\simeq \PP(pq-p-q,pq)$ is irreducible. For $[s:t]\in Z$, the stable
curve over $[s:t]$ is a curve on $\PP(q,p,1)$ defined by the weighted homogeneous equation 
$$x^p=y^q+sxyz^{pq-p-q}+tz^{pq}=0.$$ It is easy to verify that every member of this
family is in fact an irreducible stable curve. 

We proceed to compute the intersection numbers of $Z$ with divisor classes $\lambda, \delta_0$, 
and $\psi$. 
Let $\X_i$ be the total families of pencils $F_i$ ($i=0,1$).
Our first goal is to compare the degrees of $\delta$ and $\kappa$
on $\X_0$ and $\X_1$: 

Since $F_1$ is a general pencil, we have
$\delta(\X_1)=\delta_0(\X_1)=3(d-1)^2$. 
To find the number of singular fibers in $\X_0\smallsetminus C$, we observe
that the topological Euler characteristic of $C$ is
$
2-2g(C)-(b-1),
$
where $g(C)=\binom{d-1}{2}-g-b+1$ is the geometric genus of $C$. Since topological Euler characteristics
of $\X_0$ and $\X_1$ are the same, we have that
\[
\delta_{0}(\X_0 \smallsetminus C)=\delta_0(\X_1)-(2g+b-1)=\delta_0(\X_1)-(pqb^2-pb-qb+1).
\]
Since $\X_0$ and $\X_1$ are two families of plane curves of degree $d$, 
we have
\[
\kappa(\X_0)=\kappa(\X_1).
\]

Next, to compare intersection numbers of $F_0$ and $F_1$ with $\lambda$ and
$\delta_0$, we
need to write down a family of stable curves over each $F_i$. There is
nothing to do in the
case of $F_1$, since it is already a general pencil of plane curves of degree $d$. In particular, we have
$\lambda\cdot F_1=(\kappa(\X_1)+\delta_0(\X_1))/12$ by Mumford's formula.
To write down a stable family over $F_0$, we perform stable reduction of $\X_0\ra F_0$ in two steps:

\noindent
{\em Base change:} To begin, make a base change of order $bpq$
to obtain the family $\cY$ with local equation $x^{pb}-y^{qb}=t^{pqb}$.

The numerical invariants of $\cY$ are
\begin{align*}
\kappa(\cY) &=pqb\kappa(\X_0)=pqb\kappa(\X_1), \quad \text{and}\\
\delta_0(\cY\smallsetminus C)
&=pqb\delta_0(\X_0\smallsetminus C)=pqb(\delta_{0}(\X_1)-(pqb^2-pb-qb+1)).
\end{align*}

\noindent
{\em Weighted blow-up:} Let $\cZ$ be the
weighted blow-up of $\cY$, centered at $x=y=t=0$, with weights $w(x,y,t)=(q,p,1)$. The central fiber of $\cZ$ 
becomes 
$\tilde C \cup T$ of the form described in Proposition \ref{P:tails-description}, with smooth $T$.
The self-intersection of the tail $T$ inside $\cZ$ is $(-b)$. By intersecting both sides of 
$K_{\cZ}=\pi^*K_{\cY}+aT$ with $T$, we find that $a=p+q-pqb$. It follows that
\[
\kappa(\cZ)=\kappa(\cY)-b(pqb-p-q)^2=pqb\kappa(\X_1)-
b(pqb-p-q)^2.
\]

The number $\delta_{0}(\cZ\setminus (\tilde C \cup T))$ of singular fibers in $\cZ\setminus (\tilde C \cup T)$ is the same as in $\cY \smallsetminus C$ 
and
equals to
\[
pqb\delta_{0}(\X_1)-pqb(pqb^2-pb-qb+1).
\]
Remembering that the central fiber of $\cZ$ has exactly $b$ nodes, we
compute
\begin{align*}
\delta_0\cdot Z=pqb\delta_0(\X_1)-\delta_0(\cZ) &=pqb(pqb^2-pb-qb+1)-b, \\
\kappa \cdot Z=pqb\kappa(\X_1)-\kappa(\cZ) &=b(pqb-p-q)^2.
\end{align*}
Using Mumford's formula $\lambda=(\kappa+\delta)/12$, we obtain
\begin{align*}
\lambda\cdot Z=\frac{b}{12}\left(
(pqb-p-q)^2+pq(pqb^2-pb-qb+1)-1\right).
\end{align*}
We leave it as an exercise for the reader to verify that $\psi\cdot Z=b$.

\end{proof}

\begin{cor}\label{C:toric-family} Suppose $p$ and $q$ are coprime.
Then for the one-parameter family $Z$ of irreducible
one-pointed tails of stable limits of $x^p=y^q$, constructed in Proposition \ref{P:toric-family}, we have
\[
\frac{(\delta-\psi)\cdot Z}{\lambda\cdot
Z}=12\frac{pq(p-1)(q-1)-1}{(p-1)(q-1)(2pq-p-q-1)}.
\]
Considered as a family of unpointed curves of genus $g=(p-1)(q-1)/2$, $Z$ has slope
\[
\frac{\delta \cdot Z}{\lambda\cdot Z}=\frac{12pq}{2pq-p-q-1}.
\]
\end{cor}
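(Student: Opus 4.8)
The plan is to specialize Proposition \ref{P:toric-family} to the case $b=1$ and then carry out a short algebraic simplification. Setting $b=1$ in the genus formula $g=(pqb^2-pb-qb-b+2)/2$ gives $g=(pq-p-q+1)/2=(p-1)(q-1)/2$, in agreement with the statement, so that $Z$ is the asserted one-parameter family of one-pointed genus $g$ tails. Substituting $b=1$ into the intersection numbers of Proposition \ref{P:toric-family} yields
\[
\lambda\cdot Z=\tfrac{1}{12}\bigl((pq-p-q)^2+pq(pq-p-q+1)-1\bigr),\qquad \delta_0\cdot Z=pq(pq-p-q+1),\qquad \psi\cdot Z=1.
\]
Since the proof of Proposition \ref{P:toric-family} exhibits $Z$ as a family of \emph{irreducible} stable curves, we also have $\delta_i\cdot Z=0$ for $i>0$, hence $\delta\cdot Z=\delta_0\cdot Z$.

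Next I would use the identity $pq-p-q+1=(p-1)(q-1)$ to rewrite $\delta_0\cdot Z=pq(p-1)(q-1)$ and $(\delta-\psi)\cdot Z=pq(p-1)(q-1)-1$, so that
\[
\frac{(\delta-\psi)\cdot Z}{\lambda\cdot Z}=\frac{12\bigl(pq(p-1)(q-1)-1\bigr)}{(pq-p-q)^2+pq(p-1)(q-1)-1}.
\]
It then remains to verify the polynomial identity $(pq-p-q)^2+pq(p-1)(q-1)-1=(p-1)(q-1)(2pq-p-q-1)$; writing $r=pq$ and $s=p+q$, both sides expand to $2r^2-3rs+s^2+r-1$, which settles the first displayed formula. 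The second formula is obtained identically: $\delta\cdot Z/\lambda\cdot Z$ equals $12\,pq(p-1)(q-1)$ divided by the same denominator $(p-1)(q-1)(2pq-p-q-1)$, and cancelling the factor $(p-1)(q-1)$ gives $12pq/(2pq-p-q-1)$.

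I do not expect a real obstacle here: the entire content of the corollary is already packaged in Proposition \ref{P:toric-family}, and what remains is the bookkeeping identity $pq-p-q+1=(p-1)(q-1)$ together with the routine polynomial identity above. The one point deserving a word of care is the passage from the one-pointed family to the unpointed one when computing the slope: forgetting the marked section changes neither $\lambda\cdot Z$ nor $\delta_0\cdot Z$, since both are intrinsic to the underlying family of curves and $Z$ consists of irreducible curves so no component becomes unstable; hence the unpointed slope $\delta\cdot Z/\lambda\cdot Z$ is computed from exactly the same numbers.
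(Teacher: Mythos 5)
Your proposal is correct and is exactly the argument the paper intends: the corollary is stated without proof immediately after Proposition \ref{P:toric-family}, and the only content is the specialization $b=1$ together with the identity $pq-p-q+1=(p-1)(q-1)$ and the polynomial check you carry out. Your remarks that $\delta\cdot Z=\delta_0\cdot Z$ (since the members of $Z$ are irreducible) and that forgetting the marked point does not change $\lambda\cdot Z$ or $\delta\cdot Z$ are the right points to flag, and they match the construction in the proof of the proposition.
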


\section{Connections to GIT} 
\label{section-git}

The $\alpha$-invariant can be reinterpreted in terms of the Hilbert-Mumford index in geometric invariant theory. 
Recall that for every $g$, $n$, and $m$, we have the Hilbert and Chow GIT quotients
$$\overline{\Hilb}_{g,n}^{\, m, \ss} \gitq \SL_{N}  \quad \text{and} \quad \overline{\textrm{Chow}}_{g,n}^{\, \ss} \gitq \SL_{N}$$
parameterizing, respectively, semistable $m^{th}$ Hilbert points of $n$-canonically
embedded curves of genus $g$, and semistable Chow points of $n$-canonically 
embedded curves of genus $g$ curves, up to projectivities. 
Here, $N = g$ if $n=1$, and $N = (2n-1)(g-1)$ if $n > 1$.

\begin{prop}  \label{proposition-git}
Let $C$ be a Gorenstein $n$-canonically embedded genus $g$ curve which admits a $\GG_m$-action 
$\eta\co \GG_m \to \Aut(C)$. Consider the induced one-parameter subgroup 
$\tilde{\eta}\co \GG_m \to \SL_{N}$.  Then the Hilbert-Mumford indices of the $m^{th}$ Hilbert 
point of $C$, respectively the Chow point of $C$, with respect to $\tilde{\eta}$ are
$$
\mu^{\overline{\Hilb}_{g,n}^{\, m}} ([C], \tilde{\eta})= 
 \begin{cases}
 \chi_{\lambda}+(m-1)\left[ ((4g+2)m-g+1) \chi_{\lambda}-\frac{gm}{2} \chi_{\delta} \right], & \ \text{if $n=1$}, \\
 (m-1)(g-1)\left[ (6mn^2-2mn-2n+1) \chi_{\lambda}-\frac{mn^2}{2} \chi_{\delta}  \right], & \ \text{if $n>1$,}\\
 \end{cases}
$$
and
$$\mu^{\overline{\text{{\em Chow}}}_{g,n}} ([C], \tilde{\eta})= 
\begin{cases}
	(4g+2) \chi_{\lambda} - \frac{g}{2} \chi_{\delta}\ , & \text{ if } n = 1,\\
	(g-1)n[(6n-2) \chi_{\lambda} - \frac{n}{2} \chi_{\delta}], & \text{ if } n > 1.
\end{cases}
$$
\end{prop}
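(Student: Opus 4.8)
The plan is to translate each Hilbert--Mumford index into a $\GG_m$-weight and then to evaluate that weight using the character computations of Section~\ref{S:character-computations}.

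First I would reduce to weights. Since $C$ is fixed, as a point of the Hilbert scheme (resp.\ of the Chow variety), by $\tilde{\eta}$, Mumford's numerical recipe computes $\mu([C],\tilde{\eta})$ by the weight of $\tilde{\eta}$ on the fibre over $[C]$ of the ambient $\SL_N$-linearised ample line bundle, with the standard sign convention. So I would carry along two one-parameter subgroups: the representation $\eta'\co\GG_m\to\GL(V)$ on $V=\HH^0(C,\omega_C^{n})$ induced by $\eta$, whose determinant character is $z\mapsto z^{\chi_{\lambda_n}(C,\eta)}$, and its rescaling $\tilde{\eta}(z)=z^{-\chi_{\lambda_n}(C,\eta)}\,\eta'(z^{N})$ into $\SL_N=\SL(V)$ (the finite ambiguity in a genuine lift is irrelevant to $\mu$). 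It is this rescaling that produces the overall factors $g$, $g-1$, $N$ in the statement, and its interaction with the Hilbert function forces the $n=1$ versus $n>1$ dichotomy, since $N=\dim_k\HH^0(C,\omega_C^{n})$ equals the Hilbert-polynomial value $(2n-1)(g-1)$ only for $n\ge2$ while $N=g$ for $n=1$.

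Next I would identify the polarisations. On the stack of $n$-canonically embedded genus $g$ curves, with universal curve $\pi$ and $\cO(1)=\omega_\pi^{\otimes n}$, the Plücker polarisation of the $m$-th Hilbert scheme is $\det\pi_*\cO(m)=\lambda_{nm}$, twisted by the power of $\det\pi_*\cO(1)=\lambda_n$ needed to descend to the $\PGL_N$-quotient; pushing this through the Grassmannian Plücker embedding together with the rescaled $\tilde{\eta}$ gives, after a short computation,
\[
\mu^{\overline{\Hilb}_{g,n}^{\, m}}([C],\tilde{\eta})=N\,\chi_{\lambda_{nm}}(C,\eta)-m\,P(m)\,\chi_{\lambda_n}(C,\eta),\qquad P(m)=(2nm-1)(g-1),
\]
($P(m)$ the Hilbert polynomial, which coincides with $h^0(C,\omega_C^{nm})$ for $nm\ge2$). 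For the Chow point I would invoke Mumford's comparison of Hilbert and Chow stability: the Hilbert weight of a $\GG_m$-fixed curve is a quadratic polynomial in $m$ whose leading ($m^2$) coefficient is the Chow weight in the appropriate normalisation; equivalently, one identifies the Chow-form line bundle of a family of curves with the Deligne self-pairing $\langle\cO(1),\cO(1)\rangle\cong\lambda_{2n}\otimes\lambda_n^{\otimes-2}\otimes\lambda_1^{\otimes-1}$, again twisted for $\SL_N$-equivariance. Either way $\mu^{\overline{\text{{\em Chow}}}_{g,n}}([C],\tilde{\eta})$ becomes an explicit integral combination of $\chi_{\lambda_{2n}}(C,\eta)$, $\chi_{\lambda_n}(C,\eta)$ and $\chi_{\lambda_1}(C,\eta)=\chi_\lambda(C,\eta)$.

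Finally I would evaluate. By definition $\chi_{\lambda_j}(C,\eta)$ is the $\eta$-weight on $\lambda_j|_{[C]}=\bigwedge\HH^0(C,\omega_C^{j})$, while the Grothendieck--Riemann--Roch relation underlying \eqref{relations} gives $\lambda_j=\binom{j}{2}\kappa+\lambda=\binom{j}{2}(12\lambda-\delta)+\lambda$ wherever $\kappa$ and $\delta$ are defined; under the standing hypotheses (precisely those under which $\chi_\delta$ is introduced in Section~\ref{subsection-delta}, e.g.\ the discriminant in $\Def(C)$ Cartier and the codimension-$2$ condition, so that the extension argument of Lemma~\ref{L:Observations} applies on $\Def(C)$) this holds near $[C]$, whence
\[
\chi_{\lambda_j}(C,\eta)=\tbinom{j}{2}\bigl(12\chi_\lambda-\chi_\delta\bigr)+\chi_\lambda .
\]
Substituting this into the combinations above and simplifying the resulting polynomials in $m$ (and $n$) yields the four displayed formulas. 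The main obstacle is not conceptual but organisational: pinning down exactly which twist of $\lambda_{nm}$ (and of the Deligne pairing) is the honest $\SL_N$-linearisation, and keeping the $\GL(V)$-weight and the rescaled $\SL(V)$-weight straight so that the universal constants $g$, $g-1$, $N$ and the extra $\chi_\lambda$ in the $n=1$ case come out correctly; everything after that is routine binomial-coefficient bookkeeping.
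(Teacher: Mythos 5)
Your proposal is correct and takes essentially the same route as the paper, whose proof is a one-line citation to Mumford's Theorem 5.15 and Hassett--Hyeon for exactly this computation of the divisor classes of the GIT linearizations in terms of $\lambda$ and $\delta$. Your normalization $\mu^{\overline{\Hilb}_{g,n}^{\,m}}=N\,\chi_{\lambda_{nm}}-mP(m)\,\chi_{\lambda_n}$ together with $\chi_{\lambda_j}=\chi_\lambda+\binom{j}{2}(12\chi_\lambda-\chi_\delta)$ reproduces all four displayed formulas exactly (including the extra $\chi_\lambda$ in the $n=1$ case and the Chow weights as the leading $m^2$-coefficients), so the only thing left is the bookkeeping you already flag.
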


\begin{proof} This result follows directly by computing the divisor classes of the GIT linearizations as in \cite[Theorem 5.15]{mumford-stability} or \cite[Section 5]{hassett-hyeon_flip}.
\end{proof}

This proposition implies that if one can compute the characters of $\lambda$ and $\delta$ (or equivalently $\lambda$ 
and $\lambda_2$) with respect to one-parameter subgroups of the automorphism group, then one immediately knows 
the Hilbert-Mumford indices for all Hilbert and Chow quotients for such one-parameter subgroups. Moreover, 
if the Hilbert-Mumford index with respect to a one-parameter subgroup of the automorphism group is non-zero, then 
the curve is unstable. In particular, we recover results of \cite[Propositions 2 and 3]
{hyeon_predictions}.

\subsection*{GIT stability of ribbons}
Applying Proposition \ref{proposition-git}, we obtain the following result as a corollary of computations
made in Example \ref{E:ribbons}, whose notation we keep. 
\begin{thm}[Hilbert stability of ribbons] \label{theorem-ribbons} 
Let $C_\ell$ be a ribbon defined by $f(y)=y^{-\ell}$ for some $\ell\in \{1,\dots, g-2\}$.
Then $C_\ell$ admits a $\GG_m$-action $\rho\co \GG_m\ra \Aut(C_\ell)$ and 
\begin{enumerate}
\item If $\ell\neq (g-1)/2$, then the 
$m^{th}$ Hilbert point of the $n$-canonical embedding of 
$C_{\ell}$ is unstable
for all $m\geq 2$ and $n\geq 1$.
\item If $\ell= (g-1)/2$, then the $m^{th}$ Hilbert point of the $n$-canonical embedding of 
$C_ {\ell}$ is unstable for all $m\geq 2$ and $n\geq 2$.
\item If $\ell= (g-1)/2$, then the $m^{th}$ Hilbert point of the {\em canonical embedding} of $C_ {\ell}$ is strictly semistable with respect to $\rho$ for all $m\geq 2$.
\end{enumerate}
\end{thm}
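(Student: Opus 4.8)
The plan is to combine the character values of Example~\ref{E:ribbons} with the Hilbert--Mumford index formulas of Proposition~\ref{proposition-git}: parts (1) and (3) then follow by a direct substitution, while part (2) requires an auxiliary degeneration. Note first that the hypothesis $\ell\in\{1,\dots,g-2\}$ forces $g\ge 3$. Recall from Example~\ref{E:ribbons} that $C_\ell$ carries the one-parameter subgroup $\rho$, that $\chi_\lambda(C_\ell)=g\bigl(\ell-\tfrac{g-1}{2}\bigr)$ and $\chi_\delta(C_\ell)=(8g+4)\bigl(\ell-\tfrac{g-1}{2}\bigr)$, so that $g\,\chi_\delta(C_\ell)=(8g+4)\,\chi_\lambda(C_\ell)$, and that all of $\chi_\lambda,\chi_{\lambda_2},\chi_\delta$ vanish precisely when $\ell=(g-1)/2$. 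For each $n$ the subgroup $\rho$ induces $\tilde\rho\co\GG_m\to\SL_N$; substituting $g\,\chi_\delta=(8g+4)\,\chi_\lambda$ into Proposition~\ref{proposition-git}, the $\chi_\delta$-term cancels against most of the $\chi_\lambda$-term and one is left with
\[
\mu^{\overline{\Hilb}_{g,1}^{\,m}}([C_\ell],\tilde\rho)=\bigl(m-(m-1)g\bigr)\,\chi_\lambda(C_\ell)
\]
for the canonical embedding, and
\[
\mu^{\overline{\Hilb}_{g,n}^{\,m}}([C_\ell],\tilde\rho)=\tfrac{1}{g}(m-1)(g-1)\bigl(2mn^2(g-1)-2gn(m+1)+g\bigr)\,\chi_\lambda(C_\ell)
\]
for $n\ge 2$. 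Each of these indices vanishes if and only if $\chi_\lambda(C_\ell)=0$, i.e.\ if and only if $\ell=(g-1)/2$: the coefficient $m-(m-1)g$ equals $0$ only for $g=m/(m-1)\le 2$, and the integer $2mn^2(g-1)-2gn(m+1)+g=g(2mn^2-2mn-2n+1)-2mn^2$ is readily checked to be nonzero for all $m,n\ge 2$ and $g\ge 3$ (the ratio $2mn^2/(2mn^2-2mn-2n+1)$ lies strictly between $1$ and $3$ and hits $3$ only at non-integral $(m,n)$).

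For part (1), $\ell\ne(g-1)/2$ gives $\chi_\lambda(C_\ell)\ne 0$, hence $\mu([C_\ell],\tilde\rho)\ne 0$ for all $m\ge 2$ and $n\ge 1$; since $[C_\ell]$ is $\tilde\rho$-fixed and $\tilde\rho^{-1}$ also comes from $\Aut(C_\ell)$, one of $\tilde\rho,\tilde\rho^{-1}$ has negative index and $[C_\ell]$ is Hilbert-unstable, exactly as in the remark following Proposition~\ref{proposition-git}. For part (3), $\ell=(g-1)/2$ makes all characters vanish, so $\mu^{\overline{\Hilb}_{g,1}^{\,m}}([C_\ell],\tilde\rho)=0$ for every $m\ge 2$: the $m^{th}$ Hilbert point of the canonical embedding is fixed by the nontrivial one-parameter subgroup $\tilde\rho$ with vanishing Hilbert--Mumford weight, which is precisely the assertion that it is strictly semistable with respect to $\rho$ (the subgroup $\rho$ does not destabilize, yet it lies in the stabilizer of the Hilbert point, so that point cannot be stable).

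Part (2) is the main obstacle: when $\ell=(g-1)/2$ the formula also yields $\mu([C_\ell],\tilde\rho)=0$ for $n\ge 2$, so $\rho$ does not destabilize and a one-parameter subgroup outside $\Aut(C_\ell)$ is needed. I would use the subgroup $\sigma$ of $\SL_N$ that rescales the ``nilpotent'' summand $\HH^0(\PP^1,\cI_{C_\ell}\otimes\omega_{C_\ell}^{\otimes n})$ of $\HH^0(C_\ell,\omega_{C_\ell}^{\otimes n})$ against the ``reduced'' summand $\HH^0(\PP^1,\omega_{C_\ell}^{\otimes n}|_{\PP^1})$: as $t\to 0$ the extension class of the ribbon is killed, so $\sigma(t)\cdot[C_\ell]$ degenerates to the $n$-canonical split ribbon $[R]$, whence $\mu([C_\ell],\sigma)=\mu([R],\sigma)$. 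For $n\ge 2$ (and $g\ge 3$) the nilpotent summand is nonzero, so $R$ is genuinely $n$-canonically embedded and Proposition~\ref{proposition-git} applies to $(R,\sigma)$; the character computation of Example~\ref{E:ribbons} carried out for $(R,\sigma)$ gives the same ratio $\chi_\delta(R,\sigma)/\chi_\lambda(R,\sigma)=(8g+4)/g$, but now with $\chi_\lambda(R,\sigma)=-g\ne 0$ (all of $\HH^0(\omega_R)=\HH^0(\PP^1,\omega_R|_{\PP^1})$ sits at a single nonzero $\sigma$-weight). Substituting as above, $\mu([R],\sigma)=-(m-1)(g-1)\bigl(2mn^2(g-1)-2gn(m+1)+g\bigr)$, which is nonzero for $m,n\ge 2$, $g\ge 3$. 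Since $[R]$ is $\sigma$-fixed it is therefore Hilbert-unstable, and since $[R]$ lies in the orbit closure $\overline{\SL_N\cdot[C_\ell]}$ while the semistable locus is cut out by an invariant section (hence closed under degeneration), $[C_\ell]$ is itself unstable. The delicate steps are realizing the degeneration $C_\ell\rightsquigarrow R$ by an honest one-parameter subgroup of $\SL_N$, checking that $R$ is $n$-canonically embedded exactly when $n\ge 2$ (for $n=1$ all of $\HH^0(\omega_R)$ comes from the reduction and $R$ is not embedded, which is why part (3) is a genuinely different statement), and the character bookkeeping for $(R,\sigma)$.
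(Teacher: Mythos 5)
Your treatment of parts (1) and (3) follows the paper's proof exactly: substitute the characters $\chi_\lambda(C_\ell)=g\bigl(\ell-\tfrac{g-1}{2}\bigr)$ and $\chi_\delta(C_\ell)=(8g+4)\bigl(\ell-\tfrac{g-1}{2}\bigr)$ from Example \ref{E:ribbons} into Proposition \ref{proposition-git} and check that the resulting coefficient of $\chi_\lambda$ is nonzero. Your simplified formulas agree with the paper's $\mu^{\overline{\Hilb}_{g,1}^{\,m}}([C_\ell],\rho)=g(g+m-gm)\bigl(\ell-\tfrac{g-1}{2}\bigr)$, and your nonvanishing check for $n\ge 2$ reaches the right conclusion, although the parenthetical claim that the ratio $2mn^2/(2mn^2-2mn-2n+1)$ lies strictly between $1$ and $3$ fails at $m=n=2$ (where it equals $16/5=3.2$); what actually matters, and what is true, is that this ratio is never an integer $\ge 3$.

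Part (2) is where you and the paper part ways, and your diagnosis appears to be correct. The paper's proof disposes of (2) by asserting that $\mu^{\overline{\Hilb}_{g,n}^{\,m}}([C_\ell],\rho)$ is nonzero for all $n,m\ge 2$; but Proposition \ref{proposition-git} expresses this index as a linear combination of $\chi_\lambda$ and $\chi_\delta$, both of which vanish when $\ell=(g-1)/2$, so the index with respect to $\rho$ is identically zero for the balanced ribbon in \emph{every} embedding. A one-parameter subgroup not coming from $\Aut(C_\ell)$ is therefore genuinely needed, which is exactly what you propose: destabilize via the filtration of $\HH^0(C_\ell,\omega_{C_\ell}^{\otimes n})$ by sections vanishing on the reduced subcurve, degenerate to the split ribbon $R$, and transfer instability back to $[C_\ell]$ by the invariant-section argument. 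This is the right mechanism and the transfer step is sound. However, as you yourself flag, your part (2) is a plan rather than a proof: you have not shown that the flat limit of $[C_\ell]$ under $\sigma$ is the $n$-canonically embedded split ribbon (rather than, say, a subscheme with embedded points or a differently polarized scheme), nor that Proposition \ref{proposition-git} applies to the pair $(R,\sigma)$, nor the asserted values $\chi_\lambda(R,\sigma)=-g$ and $\chi_\delta(R,\sigma)=\tfrac{8g+4}{g}\chi_\lambda(R,\sigma)$, nor the sign analysis needed to produce a \emph{negative} index. Those verifications are the actual content of part (2) and must be carried out before the argument is complete; as it stands, neither your sketch nor the paper's one-line claim proves statement (2).
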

\begin{proof} 
The ribbon $C_{\ell}$ is obtained by gluing 
$\spec k[x,\epsilon]/(\epsilon^2)$ and
$\spec k[y,\eta]/(\eta^2)$
along open affines 
$\spec k[x,x^{-1},\eps]/(\eps^2)$ and $\spec k[y,y^{-1},\eta]/(\eta^2)$ by
\begin{align*}
x &\mapsto y^{-1}-y^{-\ell-2}\eta, \\
\eps &\mapsto y^{-g-1}\eta,
\end{align*}
We consider the $\GG_m$-action on $C_\ell$ given by 
$t\cdot (x,y,\eps, \eta)=(tx, t^{-1}y, t^{g-\ell}\eps, t^{-\ell-1}\eta)$. 
It induces a one-parameter subgroup $\rho\co 
\GG_m \ra \Aut(C_{\ell})$. By Example \ref{E:ribbons}, the characters of $C_\ell$ are 
\begin{align*}
\chi_\lambda(C_\ell, \rho)=g\bigl(\ell-\frac{g-1}{2}\bigr), \qquad
\chi_\delta(C_\ell, \rho)=(5g-4)\bigl(\ell-\frac{g-1}{2}\bigr).
\end{align*}
It follows by Proposition \ref{proposition-git} that the Hilbert-Mumford index with respect to $\rho$ 
of the $m^{th}$ Hilbert point of the canonical embedding of $C_{\ell}$ is 
\[
\mu^{\overline{\Hilb}_{g,1}^{\, m}} ([C], \rho)= 
g(g+m-gm) \bigl(\ell-\frac{g-1}{2} \bigr).
\] 
In particular, it is $0$ if and only if $\ell=(g-1)/2$. Similarly, we verify that the Hilbert-Mumford index 
$\mu^{\overline{\Hilb}_{g,n}^{\, m}} ([C], \rho)$ of the $m^{th}$ Hilbert point of the 
$n$-canonical
embedding of $C_{\ell}$ is non-zero for every $n, m\geq 2$. This finishes the proof.
 \end{proof}
\begin{cor}
If $g$ is even, then every canonically embedded genus $g$ 
ribbon with a $\GG_m$-action is Hilbert unstable. 
\end{cor}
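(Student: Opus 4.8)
The plan is to read this off directly from Theorem \ref{theorem-ribbons}, the only extra input being the parity fact that $(g-1)/2 \notin \ZZ$ when $g$ is even. First I would recall the classification underlying Example \ref{E:ribbons} (following \cite{BE}): every non-split genus $g$ ribbon admitting a $\GG_m$-action is one of the ribbons $C_\ell$ with $\ell \in \{1,\dots,g-2\}$, while the split ribbon has a degree-two canonical map onto a rational normal curve and hence is not canonically embedded. Consequently, ``canonically embedded genus $g$ ribbon with a $\GG_m$-action'' is nothing other than some $C_\ell$ with $1 \le \ell \le g-2$, and (since $g$ even forces $g \ge 4 \ge 3$) each such $C_\ell$ is indeed canonically embedded.

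Next, observe that if $g$ is even then $(g-1)/2$ is not an integer, so $\ell \ne (g-1)/2$ for every $\ell \in \{1,\dots,g-2\}$. Thus each of the relevant ribbons falls into case (1) of Theorem \ref{theorem-ribbons}, which (taking $n=1$) asserts that the $m$th Hilbert point of the canonical embedding of $C_\ell$ is unstable for all $m \ge 2$; equivalently, $C_\ell$ is Hilbert unstable, which is the assertion of the corollary. Alternatively, one can argue with the explicit formula obtained in the proof of Theorem \ref{theorem-ribbons}: for the one-parameter subgroup $\rho \in \Aut(C_\ell)$ one has $\mu^{\overline{\Hilb}_{g,1}^{\,m}}([C_\ell],\rho) = g(g+m-gm)\bigl(\ell-\tfrac{g-1}{2}\bigr)$, and when $g$ is even and $m \ge 2$ the factor $g+m-gm = g - m(g-1)$ is strictly negative while $\ell - \tfrac{g-1}{2} \ne 0$, so this Hilbert-Mumford index is nonzero; a nonzero Hilbert-Mumford index with respect to a one-parameter subgroup lying inside $\Aut(C_\ell)$ forces instability.

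Since all the substantive work is already contained in Theorem \ref{theorem-ribbons} and in the discussion of Example \ref{E:ribbons}, there is no genuine obstacle here. The only step deserving any care is the first one: one must be certain that the list $\{C_\ell : 1 \le \ell \le g-2\}$ exhausts all canonically embedded ribbons carrying a $\GG_m$-action -- in particular that the split ribbon is legitimately excluded -- so that the hypothesis of the corollary really does reduce to the cases treated in Theorem \ref{theorem-ribbons}(1).
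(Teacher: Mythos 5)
Your proof is correct and is essentially the argument the paper intends: the corollary is an immediate consequence of Theorem \ref{theorem-ribbons}(1), since for even $g$ the quantity $(g-1)/2$ is not an integer and hence no $\ell\in\{1,\dots,g-2\}$ can equal it. Your extra care in checking that the ribbons $C_\ell$ exhaust all canonically embedded genus $g$ ribbons with $\GG_m$-action (the split ribbon being excluded because its canonical map fails to embed it) is a reasonable point that the paper leaves implicit in Example \ref{E:ribbons}.
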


\bibliography{BibHK}{}
\bibliographystyle{alpha}

\end{document}